\documentclass[12pt]{article}
\usepackage{amsmath}
\usepackage{amssymb}

\setcounter{MaxMatrixCols}{10}

\newtheorem{theorem}{Theorem}
\newtheorem{proposition}{Proposition}

\newtheorem{lemma}{Lemma}
\newtheorem{remark}{Remark}
\newenvironment{proof}[1][Proof]{\noindent\textbf{#1.} }{\ \rule{0.5em}{0.5em}}
\input{tcilatex}

\begin{document}

\title{Midpoint Diagonal Quadrilaterals}
\author{Alan Horwitz}
\date{2/17/21}
\maketitle

\begin{abstract}
A convex quadrilateral, \$Q\$, is called a midpoint diagonal quadrilateral
if the intersection point of the diagonals of \$Q\$ coincides with the
midpoint of at least one of the diagonals of \$Q\$. A parallelogram, P, is a
special case of a midpoint diagonal quadrilateral since the diagonals of P\
bisect one another. We prove two results about ellipses inscribed in
midpoint diagonal quadrilaterals, which generalize properties of ellipses
inscribed in parallelograms involving convex quadrilaterals. First, \$Q\$ is
a midpoint diagonal quadrilateral if and only if each ellipse inscribed in
\$Q\$\ has tangency chords which are parallel to one of the diagonals of
\$Q\$. Second, \$Q\$ is a midpoint diagonal quadrilateral if and only if
each ellipse inscribed in \$Q\$ has a unique pair of conjugate diameters
parallel to the diagonals of \$Q\$. Finally, we show that there is a unique
ellipse, \$E\_I\$, of minimal eccentricity inscribed in a midpoint diagonal
quadrilateral, \$Q\$, and also that the unique pair of conjugate diameters
parallel to the diagonals of \$Q\$ are the \textbf{equal} conjugate
diameters of \$E\_I\$.
\end{abstract}

\textbf{Introduction}

Given a diameter, $l$, of an ellipse, $E_{0}$, there is a unique diameter, $%
m $, of $E_{0}$ such that the midpoints of all chords parallel to $l$ lie on 
$m $. In this case we say that $l$ and $m$ are conjugate diameters of $E_{0}$%
, or that $m$ is a diameter of $E$ conjugate to $l$. $l$ and $m$ are called 
\textbf{equal} conjugate diameters if $\left\vert l\right\vert =\left\vert
m\right\vert $.

We say that $E_{0}$ is \textit{inscribed}\textbf{\ }in a convex
quadrilateral, $Q$, if $E_{0}$ lies inside $Q$ and is tangent to each side
of $Q$. A tangency chord is any chord connecting two points where $E_{0}$ is
tangent to two different sides of $Q$. There are two interesting
properties(probably mostly known) of ellipses inscribed in parallelograms,
P, which involve tangency chords and conjugate diameters:

(P1) Each ellipse inscribed in P\ has tangency chords which are parallel to
one of the diagonals of P.

(P2) Each ellipse inscribed in P\ has a pair of conjugate diameters which
are parallel to the diagonals of P.

Note that when we say that two lines are parallel, we include the possibilty
that they are equal, which does, in fact, occur for (P2).

This author is not sure if P1 is known at all, while P2 appears to be known(%
\cite{PA}) only if $E_{0}$ is the ellipse of maximal area inscribed in P.
One of the purposes of this paper is to examine P1 and P2 for a larger class
of convex quadrilaterals, which we call midpoint diagonal
quadrilaterals(defined below).

\textbf{Definition:} A convex quadrilateral, $Q$, is called a midpoint
diagonal quadrilateral if the intersection point of the diagonals of $Q$
coincides with the midpoint of at least one of the diagonals of $Q$.

We show that not only do P1 and P2 each hold for midpoint diagonal
quadrilaterals, but that if $Q$ is \textbf{not }a midpoint diagonal
quadrilateral, then no ellipse inscribed in $Q$ satisfies P1 or P2. Hence
each of these properties completely characterizes the class of midpoint
diagonal quadrilaterals(see Theorems \ref{T2} and \ref{T1} below), and thus
they are a generalization of parallelograms in this sense. A parallelogram,
P, is a special case of a midpoint diagonal quadrilateral since the
diagonals of P\ bisect one another. Equivalently, if $Q$ is not a
parallelogram, then $Q$ is a midpoint diagonal quadrilateral if and only if
the line, $L_{Q}$, thru the midpoints of the diagonals of $Q$ contains one
of the diagonals of $Q$. The line $L_{Q}$\ plays an important role for
ellipses inscribed in quadrilaterals due to the following well--known
result(see \cite{C} for a proof).

\begin{theorem}
(Newton)\label{Newton}: Let $M_{1}$ and $M_{2}$ be the midpoints of the
diagonals of a quadrilateral, $Q$. If $E_{0}$ is an ellipse inscribed in $Q$%
, then the center of $E_{0}$ must lie on the open line segment, $\overline{%
M_{1}M_{2}}$, connecting $M_{1}$ and $M_{2}$.
\end{theorem}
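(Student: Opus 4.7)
The plan is to exploit affine invariance to reduce the problem to a canonical case and then verify collinearity by a direct calculation. Affine maps preserve straight lines, midpoints of segments, tangency, convexity, and the center of an ellipse, so one may apply an affine transformation sending $E_{0}$ to the unit circle $x^{2}+y^{2}=1$ centered at the origin $O$, and carrying $Q$ to some new convex quadrilateral circumscribing that circle. It therefore suffices to prove: for any convex quadrilateral circumscribing the unit circle, $O$ lies on the open segment joining the midpoints of the two diagonals.

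Parametrize the four tangent points as $T_{i}=(\cos t_{i},\sin t_{i})$ with $t_{1}<t_{2}<t_{3}<t_{4}<t_{1}+2\pi$. The tangent at $T_{i}$ is $x\cos t_{i}+y\sin t_{i}=1$, and solving two consecutive tangent equations by Cramer's rule, followed by sum-to-product identities, yields
\[
V_{i}=\frac{1}{\cos\beta_{i}}\bigl(\cos\alpha_{i},\,\sin\alpha_{i}\bigr),\qquad \alpha_{i}=\tfrac{t_{i}+t_{i+1}}{2},\ \beta_{i}=\tfrac{t_{i+1}-t_{i}}{2},
\]
with indices cyclic and $\beta_{1}+\beta_{2}+\beta_{3}+\beta_{4}=\pi$. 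Setting $M_{1}=\tfrac{1}{2}(V_{1}+V_{3})$ and $M_{2}=\tfrac{1}{2}(V_{2}+V_{4})$, collinearity of $O$, $M_{1}$, $M_{2}$ is equivalent to the vanishing of the $2\times 2$ determinant $\det(V_{1}+V_{3},\,V_{2}+V_{4})$.

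Expanding that determinant and using $\cos A\sin B-\sin A\cos B=\sin(B-A)$ reduces everything to a sum of terms of the form $\cos\beta_{j}\cos\beta_{k}\sin(\alpha_{\ell}-\alpha_{m})$. Each $\alpha_{\ell}-\alpha_{m}$ is a partial sum of the $\beta_{i}$'s, so the constraint $\sum\beta_{i}=\pi$ pairs the eight resulting terms; in particular $\sin(\beta_{1}+\beta_{2})=\sin(\beta_{3}+\beta_{4})$ and $\sin(\beta_{2}+\beta_{3})=\sin(\beta_{1}+\beta_{4})$, and after a product-to-sum step the remaining cosine factors collapse to a common expression, yielding cancellation to zero. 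Once collinearity is established, the hypothesis that $E_{0}$ lies strictly inside $Q$ places $O$ in the interior of $\overline{M_{1}M_{2}}$ and not at either endpoint. The main obstacle is organizing the trigonometric bookkeeping so that this cancellation is transparent; a more conceptual alternative works in the pencil of conics tangent to the four sides of $Q$, whose degenerate members are the pairs of opposite vertices of the associated complete quadrilateral, but proving that the locus of centers in this pencil is a line requires more projective machinery than the direct computation sketched above.
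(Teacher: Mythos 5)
The paper never proves this theorem; it is quoted as a classical result with a citation to Chakerian, so there is no internal argument to compare yours against, and any correct self-contained proof is acceptable. The collinearity half of your proposal does work. Affine reduction to the unit circle is legitimate (affine maps preserve midpoints, tangency, and centers of ellipses), your formula $V_{i}=\frac{1}{\cos \beta _{i}}(\cos \alpha _{i},\sin \alpha _{i})$ is correct, and the determinant $\det (V_{1}+V_{3},V_{2}+V_{4})$, after clearing the positive factors $\cos \beta _{1}\cos \beta _{3}$ and $\cos \beta _{2}\cos \beta _{4}$, reduces to $\sin (\beta _{1}+\beta _{2})\bigl(\cos \beta _{1}\cos \beta _{2}+\cos \beta _{3}\cos \beta _{4}\bigr)-\sin (\beta _{2}+\beta _{3})\bigl(\cos \beta _{2}\cos \beta _{3}+\cos \beta _{1}\cos \beta _{4}\bigr)$; using $\sum \beta _{i}=\pi $ and product-to-sum, each of the two products equals $\tfrac{1}{4}\sum_{i}\sin 2\beta _{i}$, so the difference is zero. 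So $O$, $M_{1}$, $M_{2}$ are indeed collinear.

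The genuine gap is the final sentence. The theorem asserts that the center lies on the \emph{open} segment $\overline{M_{1}M_{2}}$, and your justification (``the hypothesis that $E_{0}$ lies strictly inside $Q$ places $O$ in the interior'') is an assertion, not an argument: nothing you wrote excludes $O$ lying on the line through $M_{1}$ and $M_{2}$ but outside the segment, or coinciding with an endpoint. Since the three points are collinear with $O$ at the origin, strict betweenness is equivalent to $M_{1}$ and $M_{2}$ being nonzero and oppositely directed, i.e.\ to $(V_{1}+V_{3})\cdot (V_{2}+V_{4})<0$, which in your parametrization becomes $\cos (\beta _{1}+\beta _{2})\left( \frac{1}{\cos \beta _{1}\cos \beta _{2}}-\frac{1}{\cos \beta _{3}\cos \beta _{4}}\right) +\cos (\beta _{2}+\beta _{3})\left( \frac{1}{\cos \beta _{2}\cos \beta _{3}}-\frac{1}{\cos \beta _{1}\cos \beta _{4}}\right) <0$ (outside the rhombus case $M_{1}=M_{2}=O$, where the claim is vacuous). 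That inequality is true but is not a formality; it needs its own proof. The alternative you mention in passing, working in the family of conics tangent to the four side lines, is in fact the standard way to obtain openness: the degenerate members of that family are the three point-pairs of opposite vertices of the complete quadrilateral, their ``centers'' are $M_{1}$, $M_{2}$, and the midpoint of the third diagonal, and the real inscribed ellipses occupy exactly the open parameter arc between the first two degenerate members, so their centers fill the open segment. Either route must actually be carried out; note that the openness is used substantively later in the paper (Remark 2, and the open interval $I$ in (\ref{LQ})), so it cannot be omitted.
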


\begin{remark}
If $Q$ is a parallelogram, then the diagonals of $Q$ intersect at the
midpoints of the diagonals of $Q$, and thus $\overline{M_{1}M_{2}}$ is
really just one point.
\end{remark}

\begin{remark}
By Theorem \ref{Newton}, $Q$ is a midpoint diagonal quadrilateral if and
only if the center of any ellipse inscribed in $Q\ $lies on one of the
diagonals of $Q$.
\end{remark}

If $E_{0}$ is an ellipse which is not a circle, then $E_{0}$ has a unique
set of conjugate diameters, $l$ and $m$, where $\left\vert l\right\vert
=\left\vert m\right\vert $. These are called equal conjugate diameters of $%
E_{0}$.\ By Theorem \ref{T1}(i), each ellipse inscribed in a midpoint
diagonal quadrilateral, $Q$, has conjugate diameters parallel to the
diagonals of $Q$. In particular, Theorem \ref{T1}(i) applies to the unique
ellipse of minimal eccentricity, $E_{I}$, inscribed in $Q$. However, we
prove(Theorem \ref{T3}) a stronger result: The \textbf{equal} conjugate
diameters of $E_{I}$\ are parallel to the diagonals of $Q$.

\textbf{Useful Results on Ellipses and Quadrilaterals}

We now state a result, without proof, about when a quadratic equation in $x$
and $y$ yields an ellipse. The first condition ensures that the conic is an
ellipse, while the second condition ensures that the conic is nondegenerate.

\begin{lemma}
\label{L1}The equation $Ax^{2}+Bxy+Cy^{2}+Dx+Ey+F=0$, with $A,C>0$, is the
equation of an ellipse if and only if $\Delta >0$ and $\delta >0$, where

\begin{eqnarray}
\Delta &=&4AC-B^{2}\ \text{and}  \label{delta} \\
\delta &=&CD^{2}+AE^{2}-BDE-F\Delta \text{.}  \notag
\end{eqnarray}
\end{lemma}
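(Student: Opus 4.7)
The plan is the classical reduction of a general quadratic curve to its centered form and then to classify by the signs of the two invariants $\Delta$ and $\delta$.

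First I would locate the center. The center $(x_{0},y_{0})$ of the conic is the simultaneous critical point of the left-hand side, i.e., the solution of the linear system $2Ax_{0}+By_{0}+D=0$ and $Bx_{0}+2Cy_{0}+E=0$. The determinant of this system is precisely $\Delta=4AC-B^{2}$, so a unique center exists exactly when $\Delta\neq 0$. Assuming $\Delta>0$, I would solve explicitly and get $x_{0}=(BE-2CD)/\Delta$, $y_{0}=(BD-2AE)/\Delta$.

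Next I would translate coordinates by setting $x=x_{0}+\tilde{x}$, $y=y_{0}+\tilde{y}$. Because $(x_{0},y_{0})$ kills the linear terms, the equation collapses to the centered form
\[
A\tilde{x}^{2}+B\tilde{x}\tilde{y}+C\tilde{y}^{2}=K,
\]
where $K$ is the negative of the constant obtained after translation. Using the center equations to simplify, $K$ reduces to $(CD^{2}+AE^{2}-BDE-F\Delta)/\Delta=\delta/\Delta$. This computation is the only place any real algebra happens, and it is essentially bookkeeping rather than a genuine obstacle.

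Finally I would classify. The quadratic form $A\tilde{x}^{2}+B\tilde{x}\tilde{y}+C\tilde{y}^{2}$, with $A>0$, is positive definite if and only if its discriminant $4AC-B^{2}=\Delta$ is positive (standard Sylvester criterion for the symmetric matrix $\bigl(\begin{smallmatrix}A & B/2\\ B/2 & C\end{smallmatrix}\bigr)$). Given positive definiteness, the level set $A\tilde{x}^{2}+B\tilde{x}\tilde{y}+C\tilde{y}^{2}=K$ is a genuine (nondegenerate, nonempty) ellipse precisely when $K>0$, a single point when $K=0$, and empty when $K<0$. Since $K=\delta/\Delta$ and $\Delta>0$, this is equivalent to $\delta>0$. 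Conversely, if $\Delta\leq 0$ then the quadratic form is not positive definite and the conic cannot be an ellipse, while if $\Delta>0$ but $\delta\leq 0$ we get a point or the empty set. This yields the stated equivalence. The main conceptual point, rather than an obstacle, is just recognizing that $\Delta$ governs the type of the conic and $\delta$ governs whether it is real and nondegenerate.
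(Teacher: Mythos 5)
Your argument is correct, and the computation checks out: translating to the center $(x_{0},y_{0})=\left(\dfrac{BE-2CD}{\Delta},\dfrac{BD-2AE}{\Delta}\right)$ kills the linear terms and leaves the constant $-\delta/\Delta$, so the centered equation is $A\tilde{x}^{2}+B\tilde{x}\tilde{y}+C\tilde{y}^{2}=\delta/\Delta$, and the classification by positive definiteness of the leading form (governed by $\Delta$) together with the sign of the right-hand side (governed by $\delta$) gives exactly the stated equivalence. There is nothing to compare against here: the paper explicitly states this lemma \emph{without proof}, so your write-up supplies the standard argument that the author omits; as a bonus, your center computation also derives formula (\ref{center}) of Lemma \ref{L4}, which the paper likewise leaves unproved. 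The only point worth tightening is the converse when $\Delta=0$: there is then no center and the reduction to centered form is unavailable, so you should appeal directly to the classification of conics by the discriminant of the leading quadratic form (a conic with $\Delta\leq 0$ is a parabola, hyperbola, or degenerate locus, never an ellipse) rather than to the translated equation.
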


The following lemma allows us to express the eccentricity and center of an
ellipse as a function of the coefficients of an equation of that ellipse.

\begin{lemma}
\label{L4}Suppose that $E_{0}$ is an ellipse with equation $%
Ax^{2}+Bxy+Cy^{2}+Dx+Ey+F=0$. Let $a$ and $b$ denote the lengths of the
semi--major and semi--minor axes, respectively, of $E_{0}$. Let $%
(x_{0},y_{0})$ denote the center of $E_{0}$ and let $\Delta $ and $\delta $
be as in (\ref{delta}). Then%
\begin{equation}
\dfrac{b^{2}}{a^{2}}=\dfrac{A+C-\sqrt{(A-C)^{2}+B^{2}}}{A+C+\sqrt{%
(A-C)^{2}+B^{2}}}\text{\ and}  \label{eccab}
\end{equation}

\begin{equation}
x_{0}=\dfrac{BE-2CD}{\Delta },y_{0}=\dfrac{BD-2AE}{\Delta }\text{.}
\label{center}
\end{equation}
\end{lemma}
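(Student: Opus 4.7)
My plan is to prove the two formulas separately, starting with the center since it is the simpler calculation, and then using the center formula to simplify the work for the axis ratio.

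For the center $(x_0, y_0)$, I would use the fact that the center of a conic is the unique critical point of the quadratic form $F(x,y) = Ax^{2}+Bxy+Cy^{2}+Dx+Ey+F$. Equivalently, after translating coordinates by $x = x' + x_0$, $y = y' + y_0$, the coefficients of $x'$ and $y'$ must vanish. Either perspective yields the linear system
\begin{equation*}
2Ax_{0}+By_{0}+D=0,\qquad Bx_{0}+2Cy_{0}+E=0.
\end{equation*}
The coefficient determinant of this system is $4AC-B^{2}=\Delta$, and by Lemma \ref{L1} we have $\Delta > 0$, so the system is nonsingular. Cramer's rule then gives $x_{0}=(BE-2CD)/\Delta$ and $y_{0}=(BD-2AE)/\Delta$, matching (\ref{center}).

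For the axis ratio, I would first translate to the center using the formula just proved, reducing the equation to the form $Ax'^{2}+Bx'y'+Cy'^{2}+F'=0$ for some constant $F'$ (the linear terms cancel by construction). Next I would rotate coordinates by the angle $\theta$ satisfying $\cot 2\theta = (A-C)/B$ to eliminate the cross term, obtaining $\lambda_{1} X^{2}+\lambda_{2} Y^{2}+F'=0$, where $\lambda_{1},\lambda_{2}$ are the eigenvalues of the symmetric matrix $M = \begin{pmatrix} A & B/2 \\ B/2 & C \end{pmatrix}$. A direct computation gives
\begin{equation*}
\lambda_{1,2}=\frac{(A+C)\pm \sqrt{(A-C)^{2}+B^{2}}}{2}.
\end{equation*}
Writing the ellipse in standard form $X^{2}/a^{2}+Y^{2}/b^{2}=1$ (with $a \geq b$) identifies $a^{2}=-F'/\lambda_{\min}$ and $b^{2}=-F'/\lambda_{\max}$, so that $b^{2}/a^{2}=\lambda_{\min}/\lambda_{\max}$. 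Substituting the eigenvalue expressions and cancelling the common factor of $1/2$ yields exactly formula (\ref{eccab}).

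The only genuinely delicate step is the identification of the semi-axes with the correct eigenvalues: one must check that $\lambda_{\max}$ (the larger eigenvalue) corresponds to the semi-minor axis $b$, not $a$, so that $b^{2}/a^{2} \leq 1$. This follows because both eigenvalues have the same sign as $\Delta > 0$ together with the requirement $\delta > 0$ from Lemma \ref{L1}, which forces $-F' > 0$ after translation; the larger denominator in $-F'/\lambda$ yields the smaller axis. Everything else is routine linear algebra (diagonalization of a $2\times 2$ symmetric matrix and Cramer's rule), so I would not expect any serious obstacle beyond keeping track of signs.
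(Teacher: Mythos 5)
Your proposal is correct, and it is genuinely more self-contained than the paper's own argument. The paper simply quotes from its reference [S] the closed forms $a^{2}=\tfrac{1}{2}\mu\bigl(A+C+\sqrt{(A-C)^{2}+B^{2}}\bigr)$ and $b^{2}=\tfrac{1}{2}\mu\bigl(A+C-\sqrt{(A-C)^{2}+B^{2}}\bigr)$ with $\mu=4\delta/\Delta^{2}$, notes $\mu>0$ by Lemma \ref{L1}, takes the ratio to get (\ref{eccab}), and explicitly omits the details for (\ref{center}). You instead derive everything from first principles: the center via the vanishing of the gradient and Cramer's rule (the coefficient determinant being exactly $\Delta$), and the axis ratio via translation to the center followed by diagonalization of the matrix $\begin{pmatrix} A & B/2 \\ B/2 & C\end{pmatrix}$, giving $b^{2}/a^{2}=\lambda_{\min}/\lambda_{\max}$. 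Your treatment of the one delicate point is also sound: after translation the constant term is $F'=-\delta/\Delta<0$ by Lemma \ref{L1}, so both semi-axes squared $-F'/\lambda_{i}$ are positive and the larger eigenvalue pairs with the smaller axis. What your route buys is independence from the external reference and a transparent reason why the ratio in (\ref{eccab}) is free of $\delta$ and $\Delta$ (the factor $-F'$ cancels); what the paper's route buys is brevity, and its explicit formulas for $a^{2}$ and $b^{2}$ individually (equation (\ref{absq})) are recorded for possible reuse, whereas your argument only ever exhibits the ratio. No gap to report.
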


\begin{proof}
Let $\mu =\dfrac{4\delta }{\Delta ^{2}}$. By \cite{S}, 
\begin{eqnarray}
a^{2} &=&\dfrac{1}{2}\mu {\large (}A+C+\sqrt{(A-C)^{2}+B^{2}}{\large )},
\label{absq} \\
b^{2} &=&\dfrac{1}{2}\mu {\large (}A+C-\sqrt{(A-C)^{2}+B^{2}}{\large )}\text{%
.}  \notag
\end{eqnarray}%
Note that $\mu >0$ by Lemma \ref{L1}. (\ref{eccab}) then follows immediately
from (\ref{absq}). We omit the details for (\ref{center}).
\end{proof}

Throughout the paper we let $L_{Q}$ denote the \textit{line} thru the
midpoints of a given quadrilateral, $Q$, and we define an affine
transformation, $T:R^{2}\rightarrow R^{2}$ to be the map $T(\hat{x})=A\hat{x}%
+\hat{b}$, where $A$ is an invertible $2\times 2$ matrix. Note that affine
transformations map lines to lines, parallel lines to parallel lines, and
preserve ratios of lengths along a given line. Also, the family of ellipses,
tangent lines to ellipses, and four--sided convex polygons are preserved
under affine transformations.

\qquad A quadrilateral which has an incircle, i.e., one for which a single
circle can be constructed which is tangent to all four sides, is called a
tangential quadrilateral. A quadrilateral which has perpendicular diagonals
is called an orthodiagonal quadrilateral. The following lemmas show that
affine transformations preserve the class of midpoint diagonal
quadrilaterals and send conjugate diameters to conjugate diameters. The
proofs follow immediately from the properties of affine transformations.

\begin{lemma}
\label{L13}Let $T:R^{2}\rightarrow R^{2}$ be an affine transformation and
let $Q$ be a midpoint diagonal quadrilateral. Then $Q^{\prime }=T(Q)$ is
also a midpoint diagonal quadrilateral.
\end{lemma}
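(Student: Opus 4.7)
The plan is to exploit the three preservation properties of affine transformations highlighted just before the lemma: they send lines to lines, preserve incidences (hence intersection points), and preserve ratios of lengths along a line (hence midpoints). Since convex quadrilaterals are preserved as well, $Q' = T(Q)$ will again be a convex quadrilateral whose vertices are $T$ applied to those of $Q$.

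First I would identify the diagonals of $Q'$: because $T$ is a bijection that maps collinear points to collinear points, the image of the segment joining two opposite vertices of $Q$ is precisely the segment joining the corresponding two opposite vertices of $Q'$. Thus the two diagonals of $Q'$ are exactly the $T$-images of the two diagonals of $Q$.

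Next I would track the distinguished points. Let $P$ be the intersection of the diagonals of $Q$, and assume without loss of generality that $P$ coincides with $M_1$, the midpoint of diagonal $d_1$. Since $T$ preserves midpoints of segments (a consequence of preserving ratios along a line), $T(M_1)$ is the midpoint of $T(d_1)$, i.e., the midpoint of the corresponding diagonal of $Q'$. Since $T$ is a bijection preserving incidence, $T(P)$ lies on both diagonals of $Q'$ and is therefore their intersection point.

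Combining these, the intersection point $T(P)$ of the diagonals of $Q'$ equals $T(M_1)$, which is the midpoint of one of the diagonals of $Q'$. By definition, $Q'$ is a midpoint diagonal quadrilateral. There is no real obstacle here; the lemma is essentially a bookkeeping consequence of the stated affine-invariance properties, and the only care needed is to verify that $T$ does send midpoints to midpoints, which follows from the preservation of ratios of lengths along any line.
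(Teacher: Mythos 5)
Your proof is correct and follows exactly the route the paper intends: the paper simply asserts that the lemma ``follows immediately from the properties of affine transformations'' (lines to lines, preservation of incidence, and preservation of ratios along a line, hence midpoints), and your write-up just makes that bookkeeping explicit. No discrepancy.
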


\begin{lemma}
\label{L2}Let $T:R^{2}\rightarrow R^{2}$ be an affine transformation and
suppose that $l$ and $m$ are conjugate diameters of an ellipse, $E_{0}$.
Then $l^{\prime }=T(l)$ and $m^{\prime }=T(m)$ are conjugate diameters of $%
E_{0}^{\prime }=T(E_{0})$.
\end{lemma}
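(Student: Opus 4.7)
The plan is to work directly from the definition of conjugate diameters given in the introduction: $l$ and $m$ are conjugate diameters of $E_{0}$ precisely when they are diameters of $E_{0}$ and the midpoints of all chords of $E_{0}$ parallel to $l$ lie on $m$. The proof will amount to observing that each ingredient in this definition (being a diameter, being a chord, being parallel, being a midpoint) is preserved by $T$, as the paper has already listed among the properties of affine transformations.

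First I would verify that $l' = T(l)$ and $m' = T(m)$ are diameters of $E_{0}'$. Since $T$ preserves ratios of lengths along a line, it preserves midpoints; applying this to any diameter of $E_{0}$ shows that $T$ carries the center of $E_{0}$ to the center of $E_{0}'$. Because $T$ maps lines to lines and $l,m$ pass through the center of $E_{0}$, the images $l',m'$ pass through the center of $E_{0}'$, and they are chords of the ellipse $E_{0}' = T(E_{0})$ since $T$ sends the family of ellipses to itself. Hence $l'$ and $m'$ are diameters.

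Next I would check the midpoint--of--parallel--chords condition. Let $c'$ be any chord of $E_{0}'$ parallel to $l'$. Then $c = T^{-1}(c')$ is a chord of $E_{0}$, and since $T^{-1}$ (also affine) preserves parallelism, $c$ is parallel to $l$. By hypothesis, the midpoint $p$ of $c$ lies on $m$. Applying $T$ and using preservation of midpoints along a line, $T(p)$ is the midpoint of $c'$, and it lies on $T(m) = m'$. This shows every chord of $E_{0}'$ parallel to $l'$ has its midpoint on $m'$, so $l'$ and $m'$ are conjugate diameters of $E_{0}'$.

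There is essentially no main obstacle here; the result is a bookkeeping consequence of the affine properties already cited. The only small point worth being careful about is the observation that the center of an ellipse is characterized as the common midpoint of its chords through the center, which is what guarantees that $T$ sends the center of $E_{0}$ to the center of $E_{0}'$ and hence sends diameters to diameters. This is why the authors state that the proof follows immediately from the properties of affine transformations.
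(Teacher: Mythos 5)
Your proof is correct and follows exactly the route the paper intends: the paper dismisses this lemma with the remark that it "follows immediately from the properties of affine transformations," and your argument simply spells out that one-liner by checking that diameters, chords, parallelism, and midpoints are each preserved under $T$. No discrepancy with the paper's approach.
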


The following lemma shows that the scaling transformations preserve the
eccentricity of ellipses, as well as the property of the equal conjugate
diameters of an ellipse being parallel to the diagonals of $Q$.

\begin{lemma}
\label{eccinv}Let $T$ be the nonsingular affine transformation given by $%
T(x,y)=(kx,ky),k\neq 0$.

(i) Then $E_{0}$ and $T(E_{0})$ have the same eccentricity for any ellipse, $%
E_{0}$.

(ii) If $E_{0}$ is an ellipse which is not a circle and if the equal
conjugate diameters of $E_{0}$\ are parallel to the diagonals of $Q$, then
the equal conjugate diameters of $T(E_{0})$\ are parallel to the diagonals
of $T(Q)$.
\end{lemma}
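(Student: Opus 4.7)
For part (i), the plan is to read off the equation of $T(E_0)$ from that of $E_0$ and apply Lemma \ref{L4}. A point $(x',y')$ lies on $T(E_0)$ if and only if $(x'/k, y'/k)$ lies on $E_0$, so substituting into $Ax^2+Bxy+Cy^2+Dx+Ey+F=0$ and clearing denominators produces an equation for $T(E_0)$ whose quadratic coefficients are still $A$, $B$, $C$ (only $D$, $E$, $F$ get rescaled by powers of $k$). Since formula (\ref{eccab}) expresses $b^2/a^2$ entirely in terms of $A$, $B$, $C$, the ratio $b/a$ — and hence the eccentricity — is identical for $E_0$ and $T(E_0)$. Equivalently, $T$ is a similarity of ratio $|k|$, so both semi-axes scale by the same factor.

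For part (ii), I would assemble three ingredients. First, $T$ is affine, so by Lemma \ref{L2} the conjugate diameters $l$, $m$ of $E_0$ map to conjugate diameters $T(l)$, $T(m)$ of $T(E_0)$. Second, as a uniform scaling, $T$ multiplies every length by $|k|$, so $|T(l)| = |k|\,|l| = |k|\,|m| = |T(m)|$; thus $\{T(l), T(m)\}$ is a pair of equal conjugate diameters of $T(E_0)$. Since $T(E_0)$ has the same eccentricity as $E_0$ by part (i), $T(E_0)$ is also not a circle, so (as noted in the excerpt) it admits a \emph{unique} pair of equal conjugate diameters, forcing $T(l)$, $T(m)$ to be that pair. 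Finally, affine maps preserve parallelism and the diagonals of $T(Q)$ are exactly the $T$-images of the diagonals of $Q$, so if $l$ is parallel to a diagonal of $Q$, then $T(l)$ is parallel to a diagonal of $T(Q)$, and likewise for $m$.

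There is no real obstacle here: both parts follow from the dual nature of $T$ as an affine map (triggering Lemma \ref{L2} and the parallelism/quadrilateral-preservation properties) and as a similarity (so that lengths scale by the same factor $|k|$). The one subtle point worth stating explicitly is the appeal to uniqueness of the equal conjugate diameter pair, which is what upgrades ``a pair of equal conjugate diameters of $T(E_0)$'' to ``\emph{the} equal conjugate diameters of $T(E_0)$.''
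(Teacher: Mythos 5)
Your proposal is correct and follows essentially the same route as the paper: part (i) is dismissed as immediate (your coefficient/similarity argument fills in what the paper omits), and part (ii) combines Lemma \ref{L2} with the fact that the uniform scaling multiplies all lengths by $|k|$ and that affine maps preserve parallelism. Your explicit appeal to the uniqueness of the equal conjugate diameter pair is a small added nicety the paper leaves implicit (having stated that uniqueness in the introduction), but the substance is identical.
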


\begin{proof}
(i) follows immediately and we omit the proof. To prove (ii), suppose that $%
l $ and $m$ are equal conjugate diameters of an ellipse, $E_{0}$, which are
parallel to the diagonals of $Q$. By Lemma \ref{L2}, $l^{\prime }=T(l)$ and $%
m^{\prime }=T(m)$ are conjugate diameters of $E_{0}^{\prime }=T(E_{0})$.
Since $\left\vert T(P_{1})T(P_{2})\right\vert =k\left\vert
P_{1}P_{2}\right\vert $ for any two points $P_{1}=(x_{1},y_{1})$ and $%
P_{2}=(x_{2},y_{2})$, $l^{\prime }\ $and $m^{\prime }\ $are equal conjugate
diameters of $T(E_{0})$. Since affine transformations take parallel lines to
parallel lines, $l^{\prime }\ $and $m^{\prime }\ $are parallel to the
diagonals of $T(Q)$. That proves (ii).
\end{proof}

\qquad The following lemma shows when a trapezoid can be a midpoint diagonal
quadrilateral.

\begin{lemma}
\label{mdqtrap}Suppose that $Q$\ is a midpoint diagonal quadrilateral which
is also a trapezoid. Then $Q$ is a parallelogram.
\end{lemma}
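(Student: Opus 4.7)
The plan is a short similar--triangles argument, with no need to invoke the earlier machinery on ellipses or affine transformations. Label the vertices of $Q$ in cyclic order as $A,B,C,D$ with $AB\parallel CD$ (this is the pair of parallel sides guaranteed by $Q$ being a trapezoid), and let $P$ denote the intersection point of the diagonals $AC$ and $BD$. Since $Q$ is convex, $P$ lies strictly in the interior of both diagonals, so lengths of the four subsegments $AP,PC,BP,PD$ are all positive.

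Since $AB\parallel CD$, alternate interior angles along the transversal $AC$ give $\angle PAB=\angle PCD$, and similarly along $BD$ we get $\angle PBA=\angle PDC$. Hence triangles $PAB$ and $PCD$ are similar, so
\[
\frac{AP}{PC} \;=\; \frac{BP}{PD} \;=\; \frac{|AB|}{|CD|}.
\]
The midpoint diagonal hypothesis says $P$ is the midpoint of at least one of $AC$, $BD$, so at least one of these ratios equals $1$; but then \emph{both} equal $1$, and in particular $|AB|=|CD|$. Combined with $AB\parallel CD$ and the convex cyclic ordering of the vertices, this forces $ABCD$ to be a parallelogram, as desired.

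I do not anticipate any real obstacle. The only small points to verify carefully are that convexity of $Q$ ensures $P$ lies strictly between the endpoints of each diagonal (so that all denominators above are nonzero and the similarity of the two triangles is a genuine similarity rather than a degenerate coincidence), and that the combination of $AB\parallel CD$ with $|AB|=|CD|$ together with the convex cyclic labelling truly yields a parallelogram rather than a crossed quadrilateral -- both of which are immediate from the hypothesis that $Q$ is a convex quadrilateral.
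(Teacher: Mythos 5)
Your proof is correct, and it takes a genuinely different route from the paper's. The paper argues by contradiction: it uses affine invariance to normalize the trapezoid to the one with vertices $(0,0),(1,0),(0,1),(1,t)$, $0<t\neq 1$, and then computes the diagonal midpoints and the intersection point explicitly in coordinates, finding that either midpoint coincides with the intersection only when $t=1$. You instead give a synthetic argument: the two triangles cut off by the diagonals and the parallel sides are similar, so $AP/PC=BP/PD=|AB|/|CD|$, and the midpoint hypothesis forces all three ratios to be $1$, whence $|AB|=|CD|$ and $Q$ is a parallelogram. Your version avoids both the affine normalization and the coordinate computation, is direct rather than by contradiction, and makes the underlying geometric reason transparent (the intersection point divides each diagonal in the ratio of the parallel sides); the paper's version is mechanical but fits the coordinate-based machinery used throughout the rest of the article. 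The two side conditions you flag --- that convexity places $P$ strictly inside both diagonals, and that one pair of opposite sides being parallel and equal (with the convex cyclic labelling) yields a parallelogram --- are indeed immediate, so there is no gap.
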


\begin{proof}
We use proof by contradiction. So suppose that $Q$\ is a midpoint diagonal
quadrilateral which is a trapezoid, but which is \textit{not} a
parallelogram. By affine invariance, we may assume that $Q$ is the trapezoid
with vertices $(0,0),(1,0),(0,1)$, and $(1,t),0<t\neq 1$. The diagonals of $%
Q $ are then the open line segments $D_{1}$: $y=tx$ and $D_{2}$: $y=1-x$,
each with $0<x<1$. The midpoints of the diagonals are $M_{1}=\left( \dfrac{1%
}{2},\dfrac{1}{2}\right) $ and $M_{2}=\left( \dfrac{1}{2},\dfrac{t}{2}%
\right) $, and the diagonals intersect at $P=\left( \dfrac{1}{1+t},\dfrac{t}{%
1+t}\right) $. Now $M_{1}=P\iff t=1$ and $M_{2}=P\iff t=1$, which
contradicts the assumption that $t\neq 1$. Hence $Q$\ is not a midpoint
diagonal quadrilateral.
\end{proof}

\begin{remark}
We use the notation $Q(A_{1},A_{2},A_{3},A_{4})$ to denote the quadrilateral
with vertices $A_{1},A_{2},A_{3}$, and $A_{4}$, starting with $A_{1}=$ lower
left corner and going clockwise. Denote the sides of $%
Q(A_{1},A_{2},A_{3},A_{4})$ by $S_{1},S_{2},S_{3}$, and $S_{4}$, going
clockwise and starting with the leftmost side, $S_{1}$. Denote the lengths
of the sides of $Q(A_{1},A_{2},A_{3},A_{4})$ by $a=\left\vert
A_{1}A_{4}\right\vert ,b=\left\vert A_{1}A_{2}\right\vert ,c=\left\vert
A_{2}A_{3}\right\vert $, and $d=\left\vert A_{3}A_{4}\right\vert $. Finally,
denote the diagonals of $Q(A_{1},A_{2},A_{3},A_{4})$ by $D_{1}=\overline{%
A_{1}A_{3}}$ and $D_{2}=\overline{A_{2}A_{4}}$.
\end{remark}

\medskip We note here that there are two types of midpoint diagonal
quadrilaterals: Type 1, where the diagonals intersect at the midpoint of $%
D_{2}$ and Type 2, where the diagonals intersect at the midpoint of $D_{1}$.

\textbf{Notation:} The lines containing the diagonal line segments, $D_{1}$
and $D_{2}$, of any quadrilateral are denoted by $\overleftrightarrow{D_{1}}$
and $\overleftrightarrow{D_{2}}$.

Given a convex quadrilateral, $Q=Q(A_{1},A_{2},A_{3},A_{4})$, which is 
\textbf{not} a parallelogram, it will simplify our work below to consider
quadrilaterals with a special set of vertices. In particular, there is an
affine transformation which sends $A_{1},A_{2}$, and $A_{4}$ to the points $%
(0,0),(0,1)$, and $(1,0)$, respectively. It then follows that $A_{3}=(s,t)$
for some $s,t>0$. We thus let $Q_{s,t}$ denote the quadrilateral with
vertices $(0,0),(0,1),(s,t)$, and $(1,0)$. Since $Q_{s,t}$ is convex, $s+t>1$%
. Also, if $Q$ has a pair of parallel vertical sides, first rotate
counterclockwise by $90^{\circ }$, yielding a quadrilateral with parallel
horizontal sides. Since we are assuming that $Q$ is not a parallelogram, we
may then also assume that $Q_{s,t}$ does not have parallel vertical sides
and thus $s\neq 1$. Summarizing, we have

\begin{proposition}
\label{P4}Suppose that $Q$ is a convex quadrilateral which is \textbf{not} a
parallelogram. Then there is an affine transformation which sends $Q$ to the
quadrilateral 
\begin{gather}
Q_{s,t}=Q(A_{1},A_{2},A_{3},A_{4}),  \notag \\
A_{1}=(0,0),A_{2}=(0,1),A_{3}=(s,t),A_{4}=(1,0)\text{,}  \notag \\
\text{with }(s,t)\in G\text{, where}  \label{Qst}
\end{gather}%
\begin{equation}
G=\left\{ (s,t):s,t>0,s+t>1,s\neq 1\right\} \text{.}  \label{G}
\end{equation}
\end{proposition}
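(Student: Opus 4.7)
My approach proceeds in two stages, using the fact that the affine group of $R^{2}$ acts simply transitively on ordered triples of non-collinear points. For the first stage, I would let $T$ be the unique affine transformation sending the three vertices $A_{1}, A_{2}, A_{4}$ to $(0,0), (0,1), (1,0)$, respectively, and set $(s,t) = T(A_{3})$. Since affine transformations preserve convexity and the cyclic order of vertices, $T(Q)$ is a convex quadrilateral with the three standard vertices plus $(s,t)$. Reading off the three half-plane conditions that characterize convexity of $T(Q)$ relative to the triangle with vertices $T(A_{1}), T(A_{2}), T(A_{4})$---namely, that $(s,t)$ lies strictly on the opposite side of the line $x+y=1$ from the origin, and strictly in the open first quadrant---yields $s,t>0$ and $s+t>1$ at once.

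For the second stage, I would handle the condition $s \neq 1$. The image has $s=1$ precisely when the sides $S_{1}$ (from $(0,0)$ to $(0,1)$) and $S_{3}$ (from $(1,t)$ to $(1,0)$) are both vertical, i.e., when $Q$ is a trapezoid with $S_{1} \parallel S_{3}$. Since $Q$ is not a parallelogram, the other pair $S_{2}, S_{4}$ is then non-parallel. If this occurs, I would pre-compose $T$ with a $90^{\circ}$ counterclockwise rotation, which turns the two parallel vertical sides into parallel horizontal sides. After relabeling the rotated vertices so that $A_{1}$ is again the lower-left corner and the traversal is clockwise, the parallel sides now play the role of $S_{2}$ and $S_{4}$. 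Reapplying the first-stage affine map to the relabeled quadrilateral therefore forces $t=1$ rather than $s=1$ in the new standard form, and the non-parallelism of the new $S_{1}, S_{3}$ is automatic, so $s \neq 1$.

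The only real subtlety is the bookkeeping in the second stage: one must check that the $90^{\circ}$ rotation composed with the cyclic relabeling of vertices can be realized as a single affine transformation from $Q$ to some $Q_{s,t}$ with $(s,t) \in G$, and that the clockwise, lower-left-first convention of the Remark is preserved after relabeling. Once this is set up, the remainder of the argument is just elementary linear algebra together with the half-plane characterization of convexity used in the first stage.
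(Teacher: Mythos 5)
Your proposal is correct and takes essentially the same route as the paper: the paper likewise applies the unique affine map sending $A_{1},A_{2},A_{4}$ to $(0,0),(0,1),(1,0)$, reads off $s,t>0$ and $s+t>1$ from convexity of the image, and removes the case $s=1$ by first rotating $Q$ counterclockwise by $90^{\circ}$ when it has the pair of parallel sides that would otherwise become vertical. Your second stage merely makes explicit the cyclic relabeling that the paper leaves implicit.
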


Since the midpoints of the diagonals of $Q_{s,t}$\ are $M_{1}=\left( \dfrac{1%
}{2},\dfrac{1}{2}\right) $ and $M_{2}=\left( \dfrac{s}{2},\dfrac{t}{2}%
\right) $, by Theorem \ref{Newton}, the center of any ellipse, $E_{0}$,
inscribed in $Q_{s,t}$ must lie on the open line segment $\left\{ {\large (}%
h,L_{Q}(h){\large )}:h\in I\right\} $, where 
\begin{eqnarray}
L_{Q}(x) &=&\dfrac{1}{2}\dfrac{s-t+2x(t-1)}{s-1},  \label{LQ} \\
I &=&\left\{ 
\begin{array}{ll}
\left( \dfrac{s}{2},\dfrac{1}{2}\right) & \text{if }s<1 \\ 
\left( \dfrac{1}{2},\dfrac{s}{2}\right) & \text{if }s\geq 1\text{.}%
\end{array}%
\right. \text{.}  \notag
\end{eqnarray}%
We now answer the following important question: How does one find the
equation of an ellipse, $E_{0}$, inscribed in $Q_{s,t}$ and the points of
tangency of $E_{0}$ with $Q_{s,t}$ ? We sketch the derivation of the
equation and points of tangency now. First, since $E_{0}$ has center $%
{\large (}h,L_{Q}(h){\large )},h\in I$, one may write the equation of $E_{0}$
in the form 
\begin{equation}
(x-h)^{2}+B(x-h){\large (}y-L_{Q}(h){\large )}+C{\large (}y-L_{Q}(h){\large )%
}^{2}+F=0\text{.}  \label{elleq1}
\end{equation}%
Throughout the rest of the paper we denote the open unit interval by 
\begin{equation*}
J=(0,1)\text{.}
\end{equation*}%
Now suppose that $E_{0}$ is tangent to $Q_{s,t}$ at the points $P_{q}=(q,0)$
and $P_{v}=(0,v)$, where $q,v\in J$. Differentiating (\ref{elleq1}) with
respect to $x$ and plugging in $P_{q}$ and $P_{v}$ yields 
\begin{gather}
q-h=\dfrac{BL_{Q}(h)}{2},  \label{uv} \\
v-L_{Q}(h)=\dfrac{Bh}{2C}\text{.}  \notag
\end{gather}%
Plugging in $P_{q}$ and $P_{v}$ into (\ref{elleq1}) yields $%
(q-h)^{2}-BL_{Q}(h)(q-h)+C{\large (}L_{Q}(h){\large )}^{2}+F=0$ and $h^{2}-Bh%
{\large (}v-L_{Q}(h){\large )}+C{\large (}v-L_{Q}(h){\large )}^{2}+F=0$. By (%
\ref{uv}), we have $F=\dfrac{h^{2}}{4C}\left( B^{2}-4C\right) $ and $F=%
\dfrac{L_{Q}^{2}(h)}{4}\left( B^{2}-4C\right) $. Using both expressions for $%
F$ gives

\begin{equation}
C=\dfrac{h^{2}}{L_{Q}^{2}(h)}\text{.}  \label{C}
\end{equation}%
Now by (\ref{uv}) again 
\begin{equation}
B=\dfrac{2(q-h)}{L_{Q}(h)}\text{.}  \label{B}
\end{equation}%
(\ref{uv}), (\ref{B}), and (\ref{C}) then imply that 
\begin{equation}
v=\dfrac{qL_{Q}(h)}{h}\text{.}  \label{uv2}
\end{equation}%
Substituting (\ref{B}) and (\ref{C}) into $F=\dfrac{h^{2}}{4C}\left(
B^{2}-4C\right) $ yields $F=\allowbreak \allowbreak q^{2}-2qh$. (\ref{elleq1}%
) then becomes

\begin{equation}
(x-h)^{2}+\dfrac{2(q-h)}{L_{Q}(h)}(x-h){\large (}y-L_{Q}(h){\large )}+\dfrac{%
h^{2}}{L_{Q}^{2}(h)}{\large (}y-L_{Q}(h){\large )}^{2}+q^{2}-2qh=0\text{.}
\label{elleq2}
\end{equation}

\begin{remark}
Using Lemma \ref{L1}, it is not hard to show that (\ref{elleq2}) defines the
equation of an ellipse for any $h\in I$.
\end{remark}

Finally, we want to find $h$ in terms of $q$, which makes the final equation
simpler than expressing everything in terms of $h$. One way to do this is to
use the following well--known theorem of Marden(see \cite{M}).

\begin{theorem}
(\textbf{Marden):} Let $F(z)=\dfrac{t_{1}}{z-z_{1}}+\dfrac{t_{2}}{z-z_{2}}+%
\dfrac{t_{3}}{z-z_{3}}$, $\tsum\limits_{k=1}^{3}t_{k}=1$, and let $Z_{1}$
and $Z_{2}$ denote the zeros of $F(z)$. Let $L_{1},L_{2},L_{3}$ be the line
segments connecting $z_{2}$ \& $z_{3}$, $z_{1}$ \& $z_{3}$, and $z_{1}$\& $%
z_{2}$, respectively. If $t_{1}t_{2}t_{3}>0$, then $Z_{1}$ and $Z_{2}$ are
the foci of an ellipse which is tangent to $L_{1},L_{2}$, and $L_{3}$ at the
points $\zeta _{1}=\dfrac{t_{2}z_{3}+t_{3}z_{2}}{t_{2}+t_{3}}$, $\zeta _{2}=%
\dfrac{t_{1}z_{3}+t_{3}z_{1}}{t_{1}+t_{3}}$, and $\zeta _{3}=\dfrac{%
t_{1}z_{2}+t_{2}z_{1}}{t_{1}+t_{2}}$, respectively.
\end{theorem}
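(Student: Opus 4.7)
The plan is to recognize this as a weighted generalization of the classical Siebeck--Marden theorem and proceed through a sequence of algebraic and geometric reductions. The first move is to clear denominators in $F(z)=0$: multiplying through by $(z-z_1)(z-z_2)(z-z_3)$ produces the quadratic
\[
Q(z) \;=\; t_{1}(z-z_{2})(z-z_{3}) \;+\; t_{2}(z-z_{1})(z-z_{3}) \;+\; t_{3}(z-z_{1})(z-z_{2}),
\]
whose two zeros are exactly $Z_{1}$ and $Z_{2}$. Because $\sum t_k=1$, the leading coefficient of $Q$ is $1$, and the condition $t_{1}t_{2}t_{3}>0$, combined with the normalization $\sum t_k=1$, forces all three $t_k$ to be positive (the case of two negatives contradicts $\sum t_k=1$ combined with $t_1t_2t_3>0$). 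A weighted Gauss--Lucas argument then places $Z_1,Z_2$ strictly inside the triangle with vertices $z_1,z_2,z_3$.

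Next I would check that the three claimed tangency points $\zeta_k$ actually lie on the corresponding sides. For example, $\zeta_{1}=(t_{2}z_{3}+t_{3}z_{2})/(t_{2}+t_{3})$ is a convex combination of $z_{2}$ and $z_{3}$ since $t_{2},t_{3}>0$, and similarly for $\zeta_{2},\zeta_{3}$; so each $\zeta_k$ is an interior point of $L_k$.

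The central geometric claim is that the unique conic through $\zeta_{1},\zeta_{2},\zeta_{3}$ with foci $Z_{1},Z_{2}$ is tangent to $L_{k}$ at $\zeta_{k}$. I would establish this through the focal reflection characterization of conics: a smooth conic with foci $F,F'$ is tangent to a line $\ell$ at $P$ if and only if the segments $PF$ and $PF'$ make equal angles with $\ell$ (same-side case gives an ellipse, opposite-side case gives a hyperbola). Writing the angle condition at $\zeta_{k}$ in complex form amounts to showing
\[
\arg\!\Bigl(\tfrac{\zeta_{k}-Z_{1}}{\zeta_{k}-Z_{2}}\Bigr) \;=\; 2\arg(z_{j}-z_{i}) \pmod{\pi},
\]
where $L_k$ is the segment from $z_i$ to $z_j$. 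Using $Q(z)=\prod_{j}(z-z_{j})\,F(z)$ together with the explicit formula $\zeta_{k}=(t_{i}z_{j}+t_{j}z_{i})/(t_{i}+t_{j})$, one can compute $Q(\zeta_{k})$ directly: it factors as a constant multiple of $(z_{j}-z_{i})^{2}$, which yields the reflection identity. Once the reflection identity holds at all three $\zeta_{k}$, the three points lie on a common confocal ellipse, and uniqueness of an inellipse tangent to the three sides at prescribed interior points gives the full conclusion.

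The main obstacle is the reflection-angle computation and tracking signs so that one genuinely lands on an ellipse rather than a hyperbola. A cleaner fallback, if the direct identity proves messy, is to first verify the statement for an equilateral triangle with $t_{1}=t_{2}=t_{3}=1/3$, where $Q(z)$ is proportional to $p'(z)$ for $p(z)=\prod(z-z_k)$, the inscribed ellipse is the incircle, and $Z_1=Z_2$ is the centroid. One then extends to an arbitrary triangle via an affine map (which preserves ellipses, sides, and tangency points even though it does not preserve foci individually), using that the midpoint-tangency inellipse — the Steiner inellipse — is affinely natural, and finally promotes to arbitrary positive weights $t_k$ by the uniqueness of the inellipse with prescribed tangent points, together with a continuity argument in the parameters $(t_1,t_2,t_3)$.
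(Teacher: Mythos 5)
First, a point of reference: the paper does not prove this theorem at all --- it is quoted from Marden's note \cite{M} and used as a black box --- so there is no in-paper argument to compare yours against, and your outline has to stand on its own. It contains the right central computation but two genuine gaps. The smaller one: it is false that $t_1t_2t_3>0$ together with $\sum t_k=1$ forces all $t_k>0$; for instance $(t_1,t_2,t_3)=(5,-2,-2)$ satisfies both. In that regime the zeros of $F$ lie outside the triangle and $\zeta_2,\zeta_3$ lie outside their segments (take $z_1=0,z_2=1,z_3=i$ and compute), so everything downstream of your positivity claim --- the weighted Gauss--Lucas location of $Z_1,Z_2$, the convexity of the $\zeta_k$ --- silently restricts the proof to the all-positive case, which is not the hypothesis as stated.

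The more serious gap is the sentence ``once the reflection identity holds at all three $\zeta_k$, the three points lie on a common confocal ellipse.'' That implication does not hold. Your computation is correct and clean: writing $Q(z)=(z-Z_1)(z-Z_2)$, one gets $Q(\zeta_k)=-\dfrac{t_it_jt_k}{(t_i+t_j)^2}(z_j-z_i)^2$, a negative real multiple of $(z_j-z_i)^2$; note, though, that the tangency condition involves the argument of the \emph{product} $(\zeta_k-Z_1)(\zeta_k-Z_2)$ being $2\arg(z_j-z_i)+\pi$, not the ratio you wrote. But this only shows that, for each $k$ separately, the unique member of the confocal family with foci $Z_1,Z_2$ that is tangent to the line $L_k$ touches it at $\zeta_k$. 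Confocal ellipses are nested, and three lines each tangent to a different member of the family is the generic situation; nothing in the three angle conditions forces the three sums $|\zeta_k-Z_1|+|\zeta_k-Z_2|$ to agree. The standard way to close this (as in Kalman's elementary proof of Marden's theorem) is to verify in addition that $Z_1$ and $Z_2$ are isogonal conjugates at each vertex, which falls out of the same algebra --- $Q(z_j)=t_j\prod_{k\neq j}(z_j-z_k)$ is a positive real multiple of $\prod_{k\neq j}(z_j-z_k)$ --- since isogonality at a vertex is exactly what makes the confocal ellipses tangent to the two sides through that vertex coincide. Finally, your fallback via the equilateral triangle and affine maps cannot rescue the statement: affine maps do not preserve foci, so they transport the tangency data (the Steiner-type inellipse) but not the assertion that the foci are the zeros of $F$, and the appeal to continuity in $(t_1,t_2,t_3)$ is not an argument for that assertion either.
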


Applying Marden's Theorem to the triangle $\Delta A_{2}A_{3}A_{5}$, where $%
A_{5}=\left( 0,-\dfrac{t}{s-1}\right) $, one can show that $E_{0}$ is
tangent to $Q_{s,t}$ at the point $\left( \dfrac{s-2h}{2(t-1)h+s-t},0\right) 
$. Many of the details of this can be found in \cite{H2}. Hence $q=\dfrac{%
s-2h}{2(t-1)h+s-t}$, which implies that 
\begin{equation}
h=\dfrac{1}{2}\dfrac{q(t-s)+s}{q(t-1)+1}\text{.}  \label{hz}
\end{equation}%
Substituting for $h$ in (\ref{elleq2}) using (\ref{hz}) and (\ref{LQ}), (\ref%
{elleq2}) becomes

\begin{gather}
t^{2}x^{2}+{\large (}4q^{2}(t-1)t+2qt(s-t+2)-2st{\large )}xy+  \label{elleq3}
\\
{\large (}(1-q)s+qt{\large )}^{2}y^{2}-2qt^{2}x-2qt{\large (}(1-q)s+qt%
{\large )}y+q^{2}t^{2}=0\text{.}  \notag
\end{gather}

One point of tangency is of course given by $(q,0)$. Using (\ref{elleq3}),
it is then not difficult to find the other points of tangency, which is
given in the following proposition(we have relabeled $P_{q}$ and $P_{v}$).

\begin{proposition}
\label{P1}Suppose that $E_{0}$ is an ellipse inscribed in $Q_{s,t}$. Then $%
E_{0}$ is tangent to the four sides of $Q_{s,t}$\ at the points $%
q_{1}=\left( 0,\dfrac{qt}{(t-s)q+s}\right) \in S_{1},q_{2}=\left( \dfrac{%
(1-q)s^{2}}{(t-1)(s+t)q+s},\dfrac{t{\large (}s+q(t-1){\large )}}{%
(t-1)(s+t)q+s}\right) \in S_{2},$

$q_{3}=\left( \dfrac{s+q(t-1)}{(s+t-2)q+1},\dfrac{(1-q)t}{(s+t-2)q+1}\right)
\in S_{3}$, and $q_{4}=(q,0)\in S_{4}$, $q\in J$.
\end{proposition}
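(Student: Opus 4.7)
The equation (\ref{elleq3}) was derived using the assumption that the ellipse is tangent to $S_{4}$ at $P_{q}=(q,0)$, so $q_{4}=(q,0)\in S_{4}$ is immediate. My strategy for the remaining three tangency points is to intersect the ellipse equation (\ref{elleq3}) with each of the lines $\overleftrightarrow{S_{1}},\overleftrightarrow{S_{2}},\overleftrightarrow{S_{3}}$. In each case the substitution produces a quadratic in a single parameter, and because the ellipse is inscribed in $Q_{s,t}$ this quadratic must have a repeated root at the tangency point. Vieta's formulas then hand us the coordinates of $q_{i}$ directly as $-\beta/(2\alpha)$, where $\alpha,\beta$ are the leading and linear coefficients of the quadratic, avoiding any need to expand the full discriminant.

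For $\overleftrightarrow{S_{1}}$ this is essentially free: setting $x=0$ in (\ref{elleq3}) reduces the equation to the perfect square
\[
\bigl[((1-q)s+qt)\,y-qt\bigr]^{2}=0,
\]
which yields $q_{1}=(0,\,qt/((t-s)q+s))$. (This also agrees with (\ref{uv2}) after using (\ref{hz}) and (\ref{LQ}) to compute $L_{Q}(h)/h$.)

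For $\overleftrightarrow{S_{2}}$, I would parametrize the line by $y=1+(t-1)x/s$ (the line through $A_{2}=(0,1)$ and $A_{3}=(s,t)$), substitute into (\ref{elleq3}), and clear the denominator $s^{2}$. The result is a quadratic $\alpha(s,t,q)x^{2}+\beta(s,t,q)x+\gamma(s,t,q)=0$, whose double root supplies the $x$-coordinate of $q_{2}$; the $y$-coordinate then follows from the line equation. The same procedure applied to $\overleftrightarrow{S_{3}}$, using $y=t(x-1)/(s-1)$ (the line through $A_{3}$ and $A_{4}=(1,0)$), produces $q_{3}$.

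The main obstacle is the symbolic algebra for $q_{2}$ and $q_{3}$. The coefficients $\alpha,\beta,\gamma$ are polynomials in $q$ of degree up to four, with coefficients themselves polynomial in $s$ and $t$; simplifying $-\beta/(2\alpha)$ to the closed forms in the proposition requires recognizing the common denominators $(t-1)(s+t)q+s$ for $q_{2}$ and $(s+t-2)q+1$ for $q_{3}$, which is where the bookkeeping becomes delicate. A computer algebra system is well suited to this, but by hand one can instead verify the proposition in the reverse direction: substitute each asserted $q_{i}$ into both (\ref{elleq3}) and the line equation of $S_{i}$, and check that both are satisfied. Lying on the correct line is a short rational-function identity (e.g.\ for $q_{3}$, one reduces $s+q(t-1)-((s+t-2)q+1)$ to $(1-q)(s-1)$), and lying on the ellipse combined with lying on the line — together with the inscribed hypothesis that forbids a second intersection on the inside of the side — forces tangency, completing the identification of $q_{1},q_{2},q_{3}$.
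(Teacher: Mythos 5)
Your proposal is correct and follows essentially the same route as the paper, which likewise takes (\ref{elleq3}) as the starting point (with $q_{4}=(q,0)$ built in from the derivation) and obtains the remaining tangency points by intersecting the ellipse with the side lines, merely omitting the algebra with the remark that it is ``not difficult.'' Your perfect-square observation for $q_{1}$, the double-root/Vieta device, and the line-membership identities for $q_{2}$ and $q_{3}$ all check out, so the only remaining work in either version is the routine symbolic verification you describe.
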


\begin{remark}
It is not hard to show that each of the denominators of the $q_{j}$ above
are non--zero.
\end{remark}

Finally we state the analogy of Proposition \ref{P1} for parallelograms. A
slightly different version was proven in \cite{H4}. We omit the details of
the proof.

\begin{proposition}
\label{P3}Let P\ be the parallelogram with vertices $%
A_{1}=(-l-d,-k),A_{2}=(-l+d,k),A_{3}=(l+d,k)$, and $A_{4}=(l-d,-k)$, where $%
l,k>0,d<l$. If $E_{0}$ is an ellipse inscribed in P, then $E_{0}$ is tangent
to the four sides of P\ at the points $q_{1}=\left( -l+dv,kv\right) \in S_{1}
$, $q_{2}=(-lv+d,k)\in S_{2}$, $q_{3}=\left( l-dv,-kv\right) \in S_{3}$, and 
$q_{4}=(lv-d,-k)\in S_{4}$.
\end{proposition}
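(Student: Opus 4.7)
The plan is to use the central symmetry of $P$ together with Theorem \ref{Newton} to pin down the center of $E_0$, and then to reduce the tangency computation to an axis-aligned rectangle by an affine change of coordinates.

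First, since $A_1+A_3 = A_2+A_4 = (0,0)$, the parallelogram $P$ is centrally symmetric about the origin and its two diagonals bisect each other there. In Theorem \ref{Newton} this forces $M_1 = M_2 = (0,0)$, so the segment $\overline{M_1M_2}$ degenerates to a point and the center of every ellipse $E_0$ inscribed in $P$ must be the origin. Write $E_0$ in the form $Ax^2 + Bxy + Cy^2 = 1$ with $A, C > 0$ and $4AC - B^2 > 0$. The involution $(x,y)\mapsto(-x,-y)$ preserves both $P$ and $E_0$ and interchanges $S_1\leftrightarrow S_3$ and $S_2\leftrightarrow S_4$, so the tangent points on opposite sides are negatives of one another. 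It therefore suffices to locate $q_1$ and $q_2$, after which $q_3 = -q_1$ and $q_4 = -q_2$ are automatic.

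Next, parametrize $S_1$ by writing its points as $(-l+dv, kv)$ and $S_2$ by writing its points as $(-lu+d, k)$, so the proposition reduces to the assertion $u = v$. To streamline the tangency computation, I would apply the linear map $\phi(x,y) = (x - (d/k)y,\, y)$, which fixes the origin, sends $P$ to the rectangle $R = [-l,l]\times[-k,k]$, and sends $q_1, q_2$ to $(-l, kv)$ and $(-lu, k)$. Writing the image ellipse $\phi(E_0)$ as $A'x^2 + B'xy + C'y^2 = 1$ (still centered at the origin), tangency of $\phi(E_0)$ to the vertical line $x = -l$ reduces, via the vanishing discriminant of the resulting quadratic in $y$, to $4C' = l^2(4A'C' - B'^2)$, with tangent $y$-coordinate $B'l/(2C')$. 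Tangency to $y = k$ analogously gives $4A' = k^2(4A'C' - B'^2)$, with tangent $x$-coordinate $-B'k/(2A')$. Dividing the two discriminant relations yields $C'k^2 = A'l^2$, and substituting this back into the coordinate formulas collapses both parameters to the common value $B'k/(2A'l) = B'l/(2C'k)$, giving $u = v$. Pulling back by $\phi^{-1}$ restores $q_1$ and $q_2$ to their stated forms in $P$, and central symmetry supplies $q_3$ and $q_4$.

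The only real obstacle is algebraic bookkeeping: the two tangency discriminants must be computed in parallel on the vertical and horizontal sides of $R$, and the identity $A'l^2 = C'k^2$ must be used to identify the two parameters. There is no deep ingredient beyond Newton's theorem forcing $E_0$ to be centered at the origin, which compresses the inscribed-ellipse family to a one-parameter family indexed symmetrically by the single tangent-point parameter $v$ appearing on all four sides.
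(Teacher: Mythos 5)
Your argument is correct. Note that the paper does not actually supply a proof of Proposition \ref{P3}: it states that a slightly different version was proven in \cite{H4} and omits the details, so there is no in-paper argument to compare against. Your route is clean and self-contained: Theorem \ref{Newton} degenerates $\overline{M_{1}M_{2}}$ to the single point $(0,0)$ and forces every inscribed ellipse to be centered at the origin; central symmetry then gives $q_{3}=-q_{1}$ and $q_{4}=-q_{2}$ for free; and the shear $\phi (x,y)=(x-(d/k)y,\,y)$ sends $P$ to the rectangle $[-l,l]\times \lbrack -k,k]$ while carrying the candidate tangency points to $(-l,kv)$ and $(-lu,k)$, after which the two vanishing--discriminant conditions $4C^{\prime }=l^{2}(4A^{\prime }C^{\prime }-B^{\prime 2})$ and $4A^{\prime }=k^{2}(4A^{\prime }C^{\prime }-B^{\prime 2})$ combine to give $A^{\prime }l^{2}=C^{\prime }k^{2}$ and hence $u=v$ (with $B^{\prime }=0$ giving $u=v=0$ trivially). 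Two points are worth making explicit but are immediate: the involution $(x,y)\mapsto (-x,-y)$ preserves $E_{0}$ only because its center has already been pinned to the origin, and tangency to a side segment implies tangency to the full line containing it, which is what the discriminant computation uses. Compared with the machinery the paper deploys for the non--parallelogram case (Marden's theorem applied to an auxiliary triangle, as in the derivation of Proposition \ref{P1}), your rectangle reduction is more elementary and exploits the extra symmetry available only for parallelograms.
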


\textbf{Tangency Chords Parallel to the Diagonals}

\qquad The following lemma gives necessary and sufficient conditions for the
quadrilateral $Q_{s,t}$ given in (\ref{Qst}) to be a midpoint diagonal
quadrilateral.

\begin{lemma}
\label{L6}(i) $Q_{s,t}$\ is a type 1 midpoint diagonal quadrilateral if and
only if $s=t$.

(ii) $Q_{s,t}$\ is a type 2 midpoint diagonal quadrilateral if and only if\ $%
s+t=2$.
\end{lemma}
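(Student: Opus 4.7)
The proof is a direct coordinate computation, so the plan is to (a) parametrize the two diagonals, (b) compute their intersection point $P$ explicitly, (c) write down the midpoints $M_1,M_2$ of $D_1,D_2$, and (d) check when $P=M_2$ and when $P=M_1$ respectively.

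First I would set up: $D_1=\overline{A_1A_3}$ goes from $(0,0)$ to $(s,t)$, so points on $D_1$ have the form $(us,ut)$; $D_2=\overline{A_2A_4}$ goes from $(0,1)$ to $(1,0)$, so points on $D_2$ have the form $(w,1-w)$. Solving $us=w$ and $ut=1-w$ simultaneously gives $u(s+t)=1$, hence the intersection point is
\[
P=\left(\dfrac{s}{s+t},\dfrac{t}{s+t}\right).
\]
The midpoints are $M_1=(s/2,t/2)$ (midpoint of $D_1$) and $M_2=(1/2,1/2)$ (midpoint of $D_2$), as already noted in the paper.

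For part (i), $Q_{s,t}$ is of type 1 iff $P=M_2$. Comparing the first coordinates, $s/(s+t)=1/2$ gives $s=t$; conversely, $s=t$ forces both coordinates of $P$ to equal $1/2$. For part (ii), $Q_{s,t}$ is of type 2 iff $P=M_1$, i.e., $s/(s+t)=s/2$ and $t/(s+t)=t/2$. Since $s,t>0$ on $G$ (see (\ref{G})), we may cancel to get $s+t=2$ in each equation; conversely, $s+t=2$ makes both coordinates agree.

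There is no real obstacle here: everything reduces to two one-line algebraic identities, and nondegeneracy ($s,t>0$, $s+t>1$) from $G$ ensures no division by zero and that the intersection point lies in the interior of both diagonals, so $P$ is genuinely the unique intersection point. The main thing to be careful about is simply invoking the convention in the paper for which diagonal is $D_1$ vs.\ $D_2$ (so that "type 1" corresponds to $P=M_2$ and "type 2" to $P=M_1$).
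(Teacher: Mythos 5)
Your proposal is correct and is essentially the same as the paper's proof: both compute the intersection point $P=\left(\frac{s}{s+t},\frac{t}{s+t}\right)$ of the diagonals, compare it with the midpoints $(s/2,t/2)$ of $D_1$ and $(1/2,1/2)$ of $D_2$, and read off $s=t$ for type 1 and $s+t=2$ for type 2. The only cosmetic difference is that you parametrize the diagonal segments while the paper writes the diagonal lines as $y=\frac{t}{s}x$ and $y=1-x$; the algebra and conclusions are identical.
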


\begin{proof}
The diagonal lines of $Q_{s,t}$ are $\overleftrightarrow{D_{1}}$: $y=\dfrac{t%
}{s}x$ and $\overleftrightarrow{D_{2}}$: $y=1-x$, and they intersect at the
point $P=\left( \dfrac{s}{s+t},\dfrac{t}{s+t}\right) $. The midpoints of the
diagonal line segments $D_{1}$ and $D_{2}$\ are $M_{1}=\left( \dfrac{s}{2},%
\dfrac{t}{2}\right) $ and $M_{2}=\left( \dfrac{1}{2},\dfrac{1}{2}\right) $,
respectively. Now $M_{2}=P\iff \dfrac{s}{s+t}=\dfrac{1}{2}$ and $\dfrac{t}{%
s+t}=\dfrac{1}{2}$, both of which hold if and only if $s=t$. That proves
(i). $M_{1}=P\iff \dfrac{s}{s+t}=\dfrac{1}{2}s$ and $\dfrac{t}{s+t}=\dfrac{1%
}{2}t$, both of which hold if and only if $s+t=2$. That proves (ii).
\end{proof}

Now recall property P1 from the introduction: (P1) Each ellipse inscribed in
P\ has tangency chords which are parallel to one of the diagonals of P. The
following theorem shows that P1 completely characterizes the class of
midpoint diagonal quadrilaterals.

\begin{theorem}
\label{T2}Suppose that $E_{0}$ is an ellipse inscribed in a convex
quadrilateral $Q=Q(A_{1},A_{2},A_{3},A_{4})$. Let $q_{j}\in S_{j},j=1,2,3,4$
denote the points of tangency of $E_{0}$ with $Q$, and let $D_{1}=\overline{%
A_{1}A_{3}}$ and $D_{2}=\overline{A_{2}A_{4}}$ denote the diagonals of $Q$.

(i) If $Q$ is a type 1 midpoint diagonal quadrilateral, then $%
\overleftrightarrow{q_{2}q_{3}}$ and $\overleftrightarrow{q_{1}q_{4}}$ are
parallel to $D_{2}$.

(ii) If $Q$ is a type 2 midpoint diagonal quadrilateral, then $%
\overleftrightarrow{q_{1}q_{2}}$ and $\overleftrightarrow{q_{3}q_{4}}$ are
parallel to $D_{1}$.

(iii) If $Q$ is \textbf{not }a midpoint diagonal quadrilateral, then neither 
$\overleftrightarrow{q_{1}q_{2}}$ nor $\overleftrightarrow{q_{3}q_{4}}$ are
parallel to $D_{1}$, and neither $\overleftrightarrow{q_{2}q_{3}}$ nor $%
\overleftrightarrow{q_{1}q_{4}}$ are parallel to $D_{2}$.
\end{theorem}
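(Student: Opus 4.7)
The strategy is to combine affine invariance with the explicit tangency-point formulas from Propositions \ref{P1} and \ref{P3}. Since affine transformations preserve MDQ type (Lemma \ref{L13}), incidence, and parallelism, it suffices to verify each part of the theorem on one representative of each affine class.

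If $Q$ is a parallelogram, it is an MDQ of both types. I would substitute the contact points given in Proposition \ref{P3} into the two-point slope formula and check directly that the chord pairs $(q_1q_4, q_2q_3)$ and $(q_1q_2, q_3q_4)$ are each parallel to one of the diagonals; this disposes of (i) and (ii), and (iii) is vacuous.

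If $Q$ is not a parallelogram, Proposition \ref{P4} lets me assume $Q = Q_{s,t}$ with $(s,t)\in G$. The diagonals $D_1$ and $D_2$ then have slopes $t/s$ and $-1$, and by Lemma \ref{L6} type 1 corresponds to $s=t$ while type 2 corresponds to $s+t=2$. Using Proposition \ref{P1}, the four relevant chord slopes become rational functions of $q\in J$ and $s,t$. For example, $\overleftrightarrow{q_1q_4}$ has slope $-t/((t-s)q+s)$, which equals the slope $-1$ of $D_2$ iff $(t-s)(q-1)=0$; since $J=(0,1)$ excludes $q=1$, parallelism is equivalent to $s=t$. An analogous computation for $\overleftrightarrow{q_3q_4}$ reduces the equation ``slope equals $t/s$'' to $(s+t-2)(q-1)=0$, so parallelism to $D_1$ is equivalent to $s+t=2$. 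These two calculations already settle half of each of (i), (ii) and half of (iii).

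The main obstacle is handling $\overleftrightarrow{q_2q_3}$ and $\overleftrightarrow{q_1q_2}$, whose endpoints in Proposition \ref{P1} have distinct and rather unpleasant denominators. The plan is to clear denominators before computing slopes, so that the equation ``chord slope equals diagonal slope'' reduces to a polynomial in $q$ whose vanishing set factors as $(s-t)\cdot(\text{factor nonvanishing on }J)$ for the $D_2$-parallelism and as $(s+t-2)\cdot(\text{factor nonvanishing on }J)$ for the $D_1$-parallelism. Once that factorization is exposed by direct expansion, the forward directions of (i) and (ii) follow by substituting $s=t$ or $s+t=2$, and the converse (iii) follows because the complementary factor is nonzero for all $q\in J$ whenever $(s,t)\in G$ satisfies $s\neq t$ and $s+t\neq 2$.
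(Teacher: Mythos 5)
Your proposal follows essentially the same route as the paper: reduce to $Q_{s,t}$ via Proposition \ref{P4}, compute the four tangency--chord slopes from Proposition \ref{P1}, match them against the diagonal slopes $t/s$ and $-1$, and conclude via Lemma \ref{L6}, with the parallelogram case handled separately through Proposition \ref{P3}. The only slip is cosmetic: for $\overleftrightarrow{q_{3}q_{4}}$ the condition actually reduces to $(s+t-2)q=0$ rather than $(s+t-2)(q-1)=0$, but since $J$ excludes both $0$ and $1$ the conclusion is unaffected.
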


\begin{proof}
\textbf{Case 1:} $Q$ is not a parallelogram.

Then by Proposition \ref{P4}, we may assume that $Q=$ $Q_{s,t}$\ with
diagonal lines $\overleftrightarrow{D_{1}}$: $y=\dfrac{t}{s}x$ and $%
\overleftrightarrow{D_{2}}$: $y=1-x$. Using Proposition \ref{P1}, after some
simplification we have:

slope of $\overleftrightarrow{q_{1}q_{2}}=\dfrac{t{\large (}2q(t-1)+s{\large %
)}}{s{\large (}(t-s)q+s{\large )}}$, so that the slope of $%
\overleftrightarrow{q_{1}q_{2}}=\dfrac{t}{s}\iff \allowbreak \dfrac{2(t-1)q+s%
}{(t-s)q+s}=1\iff \allowbreak \left( s+t-2\right) q=0\iff s+t=2$ since $%
q=0\notin J$.

slope of $\overleftrightarrow{q_{3}q_{4}}=\dfrac{t}{(s+t-2)q+s}$, so that
the slope of $\overleftrightarrow{q_{3}q_{4}}=\dfrac{t}{s}\iff $

$\dfrac{1}{(s+t-2)q+s}=\dfrac{1}{s}\iff (s+t-2)q=0\iff s+t=2$ since $%
q=0\notin J$.

slope of $\overleftrightarrow{q_{2}q_{3}}=-\dfrac{t{\large (}2(t-1)q+s-t+1%
{\large )}}{\left( s^{2}+t^{2}-s-t\right) q-s^{2}+st+s}$, so that the slope
of $\overleftrightarrow{q_{2}q_{3}}=-1\iff $

$t{\large (}2(t-1)q+s-t+1{\large )}=\left( s^{2}+t^{2}-s-t\right)
q-s^{2}+st+s\iff \allowbreak $

$\left( s+t-1\right) \left( s-t\right) \left( q-1\right) =0\iff s=t$ since $%
q=1\notin J$ and $s+t\neq 1$.

slope of $\overleftrightarrow{q_{1}q_{4}}=\dfrac{t}{(s-t)q-s}$, so that the
slope of $\overleftrightarrow{q_{1}q_{4}}=-1\iff $

$(s-t)q-s=-t\iff \allowbreak \left( q-1\right) \left( s-t\right) =0\iff s=t$
since $q=1\notin J$.

Theorem \ref{T2} then follows from Lemma \ref{L6}.

\textbf{Case 2:} $Q$ is a parallelogram.

As noted in the introduction, Theorem \ref{T2} is probably known in this
case, and there are undoubtedly other ways to prove it for parallelograms.
Using Proposition \ref{P3}, it follows easily that the slope of $%
\overleftrightarrow{q_{1}q_{2}}=$ the slope of $\overleftrightarrow{%
q_{3}q_{4}}=$ $\dfrac{k}{l+d}$ and the slope of $\overleftrightarrow{%
q_{2}q_{3}}=$ the slope of $\overleftrightarrow{q_{1}q_{4}}=\dfrac{k}{d-l}$.
Since the diagonal lines $Q$ are $\overleftrightarrow{D_{1}}D_{1}$: $y=k+%
\dfrac{k}{l+d}(x-l-d)$ and $\overleftrightarrow{D_{2}}$: $y=k+\dfrac{k}{d-l}%
(x+l-d)$, and a parallelogram is a special case of a midpoint diagonal
quadrilateral, that proves Theorem \ref{T2} for case 2. Note that one could
map $Q$ to the unit square and then use a simplified version of Proposition %
\ref{P3}, but that does not simplify the proof very much.
\end{proof}

\qquad Now recall that the lengths of the sides of $%
Q(A_{1},A_{2},A_{3},A_{4})$ are denoted by $a=\left\vert
A_{1}A_{4}\right\vert ,b=\left\vert A_{1}A_{2}\right\vert ,c=\left\vert
A_{2}A_{3}\right\vert $, and $d=\left\vert A_{3}A_{4}\right\vert $.

\begin{lemma}
\label{L3}Suppose that $Q=Q(A_{1},A_{2},A_{3},A_{4})$ is both a tangential
and a midpoint diagonal quadrilateral. Then $Q$ is an orthodiagonal
quadrilateral.
\end{lemma}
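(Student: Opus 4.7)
The plan is to split into two cases according to whether $Q$ is a parallelogram. In the parallelogram case, opposite sides are equal ($a=c$, $b=d$), so the Pitot identity $a+c=b+d$ collapses to $a=b$; hence $Q$ is a rhombus, whose diagonals are perpendicular. So assume $Q$ is not a parallelogram. The cyclic relabeling $A_j \mapsto A_{j+1}$ swaps $D_1$ and $D_2$ (interchanging types 1 and 2) while preserving the tangential and orthodiagonal properties, so I may assume $Q$ is type 1, i.e.\ $P := \overleftrightarrow{D_1}\cap\overleftrightarrow{D_2}$ is the midpoint of $D_2 = \overline{A_2A_4}$.

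The key step is to choose coordinates adapted to the midpoint structure. Place $P$ at the origin and $D_2$ along the $x$-axis, so $A_2=(-r,0)$ and $A_4=(r,0)$ for some $r>0$. By convexity, $A_1$ and $A_3$ lie on $\overleftrightarrow{D_1}$ on opposite sides of the $x$-axis, so I can write
\[
A_1 = -u(\cos\theta,\sin\theta), \qquad A_3 = w(\cos\theta,\sin\theta)
\]
for some $u,w>0$ and some $\theta\in(0,\pi)$, where $\theta$ is the angle $\overleftrightarrow{D_1}$ makes with the $x$-axis. The goal becomes: deduce $\cos\theta = 0$.

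A direct computation gives
\[
a^{2}=u^{2}+2ur\cos\theta+r^{2},\qquad b^{2}=u^{2}-2ur\cos\theta+r^{2},
\]
\[
c^{2}=w^{2}+2wr\cos\theta+r^{2},\qquad d^{2}=w^{2}-2wr\cos\theta+r^{2}.
\]
Setting $f(x)=\sqrt{u^{2}+2urx+r^{2}}+\sqrt{w^{2}+2wrx+r^{2}}$, one has $a+c=f(\cos\theta)$ and $b+d=f(-\cos\theta)$, so the tangential condition $a+c=b+d$ reduces to $f(\cos\theta)=f(-\cos\theta)$. Since $u,w,r>0$ and $\cos\theta\in(-1,1)$, the derivative $f'$ is strictly positive on $(-1,1)$, so $f$ is strictly increasing there. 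Therefore $\cos\theta=0$, i.e.\ $D_{1}\perp D_{2}$.

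The only subtlety I anticipate is the bookkeeping around the type-1/type-2 reduction and the verification that $u,w,r$ are all strictly positive (so that the strict monotonicity of $f$ is genuine). Positivity of $r$ is immediate, and positivity of $u,w$ follows from $P$ being strictly interior to the diagonal $D_1$ (a standard property of convex quadrilaterals); Lemma \ref{mdqtrap} guarantees that outside the parallelogram case $Q$ is not even a trapezoid, so there is no hidden degeneracy.
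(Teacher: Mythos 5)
Your proof is correct, but it takes a genuinely different route from the paper's. The paper's argument leans on Theorem \ref{T2}(i): for a type 1 midpoint diagonal quadrilateral the tangency chord $\overleftrightarrow{q_{1}q_{4}}$ of the incircle is parallel to $D_{2}$, so the triangle $\Delta q_{4}A_{1}q_{1}$ is similar to $\Delta A_{4}A_{1}A_{2}$; since the two tangent segments from $A_{1}$ to the incircle are equal, the first triangle is isosceles, hence so is the second, giving $a=b$, and symmetrically $c=d$. Thus $Q$ is a kite, $a^{2}+c^{2}=b^{2}+d^{2}$, and orthodiagonality follows. You instead bypass the tangency points entirely: you only use the \emph{existence} of an incircle through the Pitot identity $a+c=b+d$, place the diagonal intersection at the origin with $D_{2}$ on the $x$-axis, and turn the Pitot identity into $f(\cos\theta)=f(-\cos\theta)$ for a strictly increasing $f$, forcing $\cos\theta=0$. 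Your version is more elementary and self-contained --- it does not depend on Theorem \ref{T2} and hence not on the Proposition \ref{P1}/Marden machinery behind it --- while the paper's version is shorter given that machinery and makes the kite structure explicit (which the accompanying Remark emphasizes). Note that your computation recovers the kite property too: with $\cos\theta=0$ your formulas give $a^{2}=u^{2}+r^{2}=b^{2}$ and $c^{2}=w^{2}+r^{2}=d^{2}$, so you could state the stronger conclusion at no extra cost. The details you flag as potential subtleties are all sound: $u,w>0$ because the diagonals of a convex quadrilateral meet in the interior of each, the radicands in $f$ are bounded below by $(u-r)^{2}+2ur(1+x)>0$ on $(-1,1)$ so $f'$ is genuinely defined and positive there, and the cyclic relabeling argument reducing to type 1 is legitimate since tangentiality, orthodiagonality, and the midpoint-diagonal property are label-independent.
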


\begin{remark}
We actually prove more--that $Q$ is a \textbf{kite}. That is, that two pairs
of adjacent sides of $Q$ are equal.
\end{remark}

\begin{proof}
Since $Q$ is tangential, there is a circle, $E_{0}$, inscribed in $Q$. Let $%
q_{j}\in S_{j},j=1,2,3,4$ denote the points of tangency of $E_{0}$ with $Q$.
Define the triangles $T_{1}=\Delta q_{4}A_{1}q_{1}$ and $T_{2}=\Delta
A_{4}A_{1}A_{2}$, and define the lines $L_{1}=\overleftrightarrow{q_{1}q_{4}}
$ and $L_{2}=\overleftrightarrow{q_{2}q_{3}}$. Suppose first that $Q$ is a
type 1 midpoint diagonal quadrilateral. Then $L_{1}\parallel D_{2}$ by
Theorem \ref{T2}(i), which implies that $T_{1}$ and $T_{2}$ are similar
triangles. Also, since $E_{0}$ is a circle, $\left\vert
A_{1}q_{4}\right\vert =\left\vert A_{1}q_{1}\right\vert $, which implies
that $T_{1}$ is isoceles. Hence $T_{2}$ is also isoceles with $b=a$. In a
similar fashion, one can show that $c=d$ using the fact that $L_{2}\parallel 
$ $D_{2}$. Thus $a^{2}+c^{2}=b^{2}+d^{2}$, which implies that $Q$ is an
orthodiagonal quadrilateral. The proof when $Q$ is a type 2 midpoint
diagonal quadrilateral is similar and we omit the details.
\end{proof}

\qquad We now prove a result somewhat similar to Lemma \ref{L3}.

\begin{lemma}
\label{L5}Suppose that $Q=Q(A_{1},A_{2},A_{3},A_{4})$ is both a tangential
and an orthodiagonal quadrilateral. Then $Q$ is a midpoint diagonal
quadrilateral.
\end{lemma}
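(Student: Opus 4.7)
The plan is to prove the stronger statement that $Q$ must in fact be a kite (two pairs of equal adjacent sides), in parallel with the remark following Lemma \ref{L3}. Once $Q$ is known to be a kite, the midpoint diagonal property drops out by elementary Euclidean geometry: one pair of opposite vertices lies on the perpendicular bisector of the other diagonal, forcing that diagonal's midpoint to be the intersection point of the two diagonals.

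First I would record the two metric identities supplied by the hypotheses. The tangential hypothesis gives, by the Pitot theorem, $a+c=b+d$. The orthodiagonal hypothesis gives the classical identity $a^{2}+c^{2}=b^{2}+d^{2}$, which follows by dropping perpendiculars at the intersection point of the two diagonals and applying the Pythagorean theorem in each of the four resulting right triangles. Squaring the first identity and subtracting the second then yields $ac=bd$, so the pairs $\{a,c\}$ and $\{b,d\}$ share a common sum and a common product. Since two positive numbers are determined by their sum and product, this forces $\{a,c\}=\{b,d\}$ as multisets, leaving the two cases $a=b$ with $c=d$, or $a=d$ with $c=b$.

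In the first case, $|A_{1}A_{4}|=|A_{1}A_{2}|$ and $|A_{2}A_{3}|=|A_{3}A_{4}|$, so $A_{1}$ and $A_{3}$ are both equidistant from $A_{2}$ and $A_{4}$. The diagonal line $\overleftrightarrow{A_{1}A_{3}}$ is therefore the perpendicular bisector of $D_{2}=\overline{A_{2}A_{4}}$, and the intersection of the diagonals, lying on $\overline{A_{1}A_{3}}$, must be the midpoint of $D_{2}$; hence $Q$ is a type 1 midpoint diagonal quadrilateral. The second case is handled symmetrically with the roles of $D_{1}$ and $D_{2}$ exchanged, yielding a type 2 midpoint diagonal quadrilateral.

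The main obstacle I anticipate is not computational but bookkeeping: keeping straight which pairs of sides are opposite (and therefore enter the Pitot and orthodiagonal relations) versus adjacent (the pairs that turn out equal in the kite conclusion), and pairing the resulting perpendicular bisector with the correct diagonal so as to identify the type of midpoint diagonal quadrilateral. No ideas beyond those already invoked in Lemma \ref{L3} appear to be needed.
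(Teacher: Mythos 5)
Your proposal is correct, and although its algebraic opening matches the paper's, the concluding step takes a genuinely different and more elementary route. The paper likewise combines the Pitot identity $a+c=b+d$ with the orthodiagonal identity $a^{2}+c^{2}=b^{2}+d^{2}$, substituting $d=a+c-b$ and factoring to get $2(b-a)(b-c)=0$; your sum-and-product argument ($ac=bd$ together with $a+c=b+d$ forces $\{a,c\}=\{b,d\}$) arrives at the same kite dichotomy. The divergence is in how the midpoint diagonal property is then extracted. The paper keeps the inscribed circle in play: from $a=b$ it forms the isosceles triangles $\Delta q_{4}A_{1}q_{1}$ (equal tangent lengths from $A_{1}$) and $\Delta A_{4}A_{1}A_{2}$, concludes that these are similar so the tangency chord $\overleftrightarrow{q_{1}q_{4}}$ is parallel to $D_{2}$, and then invokes the contrapositive of Theorem \ref{T2}(iii). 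Your route --- noting that in the case $a=b$, $c=d$ both $A_{1}$ and $A_{3}$ are equidistant from $A_{2}$ and $A_{4}$, so $\overleftrightarrow{A_{1}A_{3}}$ is the perpendicular bisector of $\overline{A_{2}A_{4}}$ and hence meets $D_{2}$ at its midpoint --- is self-contained Euclidean geometry, makes no appeal to Theorem \ref{T2} or to the inscribed circle beyond the Pitot identity, and identifies the type (1 or 2) of midpoint diagonal quadrilateral directly in each case. What the paper's version buys is thematic consistency, reusing the tangency-chord machinery that drives the whole section; what yours buys is logical independence from Theorem \ref{T2} and a shorter path to the conclusion. The only quibble is your parenthetical justification of $a^{2}+c^{2}=b^{2}+d^{2}$: no perpendiculars need to be dropped, since the four triangles cut off at the intersection point of perpendicular diagonals are already right triangles; the identity itself is of course the standard one.
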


\begin{proof}
Since $Q$ is\ tangential, there is a circle, $E_{0}$, inscribed in $Q$ and $%
a+c=b+d$, which implies that $d=a+c-b$. Since $Q$ is orthodiagonal, $%
a^{2}+c^{2}=b^{2}+d^{2}$. Hence $b^{2}+(a+c-b)^{2}-a^{2}-c^{2}=0$, which
implies that $\allowbreak 2\left( b-c\right) \left( b-a\right) =0$, and so $%
a=b$ and/or $b=c$. We prove the case when $a=b$. Let $q_{j}\in
S_{j},j=1,2,3,4$ denote the points of tangency of $E_{0}$ with $Q$. Then the
triangle $T_{1}=\Delta q_{4}A_{1}q_{1}$ is isoceles since $\left\vert
A_{1}q_{4}\right\vert =\left\vert A_{1}q_{1}\right\vert $, and the triangle $%
T_{2}=\Delta A_{4}A_{1}A_{2}$ is isoceles since $a=b$. Thus $T_{1}$ and $%
T_{2}$ are similar triangles, which implies that the line $%
\overleftrightarrow{q_{1}q_{4}}$ is parallel to $D_{2}$. By Theorem \ref{T2}%
(iii), $Q$ is a midpoint diagonal quadrilateral.
\end{proof}

\textbf{Conjugate Diameters Parallel to the Diagonals}

Recall property P2 from the introduction: (P2) Each ellipse inscribed in P\
has a pair of conjugate diameters which are parallel to the diagonals of P.
The following theorem shows that P2 completely characterizes the class of
midpoint diagonal quadrilaterals.

\begin{theorem}
\label{T1}(i) Suppose that $Q$ is a midpoint diagonal quadrilateral. Then 
\textbf{each} ellipse inscribed in $Q$ has a unique pair of conjugate
diameters parallel to the diagonals of $Q$.

(ii) Suppose that $Q$ is \textbf{not} a midpoint diagonal quadrilateral.
Then \textbf{no} ellipse inscribed in $Q$ has conjugate diameters parallel
to the diagonals of $Q$.
\end{theorem}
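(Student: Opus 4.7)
The plan is to reduce, via Proposition~\ref{P4} and the affine invariance supplied by Lemmas~\ref{L13} and \ref{L2}, to the case $Q=Q_{s,t}$, handling the parallelogram case separately. The main tool is the standard conjugate-diameter criterion: non-vertical directions with slopes $m_1,m_2$ are conjugate with respect to the central conic $Ax^2+Bxy+Cy^2+\cdots=0$ if and only if
\[
2A + B(m_1+m_2) + 2Cm_1m_2 = 0.
\]
Since each inscribed ellipse has a unique center and a diameter through that center is determined by its direction, the uniqueness claim in~(i) is automatic once existence is established; so the real content is to identify when the two diagonal directions of $Q$ are conjugate with respect to an inscribed ellipse.

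For $Q=Q_{s,t}$ the diagonal lines have slopes $t/s$ and $-1$. Substituting $m_1=t/s$, $m_2=-1$ into the criterion and clearing the denominator $s$ reduces the question to the vanishing of
\[
P(q) := 2As + B(t-s) - 2Ct,
\]
where $A=t^2$, $B = 4q^2(t-1)t + 2qt(s-t+2) - 2st$, and $C=\bigl((1-q)s+qt\bigr)^{2}$ are read off from (\ref{elleq3}). I would carry out the expansion and collect by powers of $q$: the constant term should vanish identically, while the $q^1$ and $q^2$ coefficients combine into the factored expression
\[
P(q) = 2t(t-s)(s+t-2)\,q(q-1).
\]
Since $t>0$ and $q(q-1)\ne 0$ on $J$, $P(q)=0$ for some (equivalently every) $q\in J$ precisely when $s=t$ or $s+t=2$, which by Lemma~\ref{L6} is exactly the midpoint diagonal condition for $Q_{s,t}$. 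This single factorization simultaneously yields both (i) and (ii) in the non-parallelogram case.

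For the parallelogram case (only (i) is needed, since a parallelogram is already a midpoint diagonal quadrilateral) I would apply an affine map sending $Q$ to the unit square $[-1,1]^2$; by Theorem~\ref{Newton} every inscribed ellipse is then centered at the origin and has equation $Ax^2+Bxy+Cy^2=1$. Tangency to $x=\pm 1$ and $y=\pm 1$ forces $B^2=4C(A-1)=4A(C-1)$, hence $A=C$, and the criterion for the diagonals (slopes $\pm 1$) collapses to $2A-2C=0$, which holds automatically.

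The main obstacle is precisely the clean factorization of $P(q)$ through $q(q-1)$: one has to bookkeep the expansion of $\bigl((1-q)s+qt\bigr)^{2}$ against the contribution of $B(t-s)$ so that the constant term cancels and the coefficients of $q$ and $q^2$ differ only in sign, sharing the common factor $2t(t-s)(s+t-2)$. Once this factorization is in hand, Lemma~\ref{L6} converts the algebraic identity $P\equiv 0$ on $J$ into the geometric characterization asserted in both parts of the theorem.
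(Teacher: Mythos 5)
Your proof is correct, and I checked the key computation: with $A=t^{2}$, $B=4q^{2}(t-1)t+2qt(s-t+2)-2st$, $C=\bigl((1-q)s+qt\bigr)^{2}$ from (\ref{elleq3}), the quantity $2As+B(t-s)-2Ct$ does have vanishing constant term and factors exactly as $2t(t-s)(s+t-2)\,q(q-1)$, so by Lemma \ref{L6} the diagonal directions are conjugate for one (equivalently every) inscribed ellipse precisely when $Q_{s,t}$ is a midpoint diagonal quadrilateral; the parallelogram computation ($B^{2}=4C(A-1)=4A(C-1)\Rightarrow A=C$) is also right. However, your route is genuinely different from the paper's. The paper never touches the slope criterion for conjugacy: it maps the inscribed ellipse to a circle by an affine transformation $T$, notes that conjugate diameters of a circle are exactly the perpendicular pairs, and then invokes Lemma \ref{L3} (tangential $+$ midpoint diagonal $\Rightarrow$ orthodiagonal) for part (i) and Lemma \ref{L5} (tangential $+$ orthodiagonal $\Rightarrow$ midpoint diagonal) for part (ii), pulling the perpendicular pair back through $T^{-1}$ via Lemma \ref{L2}. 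Since Lemmas \ref{L3} and \ref{L5} are themselves proved from the tangency--chord characterization, the paper's argument exhibits Theorem \ref{T1} as a structural consequence of Theorem \ref{T2} and is essentially computation-free; your argument is independent of Theorem \ref{T2} and of Lemmas \ref{L3} and \ref{L5}, works directly from the explicit family (\ref{elleq3}), and produces the quantitative obstruction $2t(t-s)(s+t-2)q(q-1)$, which makes the "all or nothing" dichotomy between (i) and (ii) visible in a single identity. Your observation that uniqueness of the pair is automatic (one diameter per direction through the unique center) is also a fair, if implicit, reading of what the paper means by "unique pair."
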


\begin{proof}
Let $E_{0}$ be an ellipse inscribed in $Q$ and let $D_{1}$ and $D_{2}$
denote the diagonals of $Q$. Use an affine transformation, $T$, to map $%
E_{0} $ to a circle, $E_{0}^{\prime }$, inscribed in the tangential
quadrilateral, $Q^{\prime }=T(Q)$. Let $L_{1}$ be a diameter of $E_{0}\ $%
parallel to $D_{1}$. $T$ maps $L_{1}$ to a diameter, $L_{1}^{\prime }$, of $%
E_{0}^{\prime }\ $parallel to one of the diagonals of $Q^{\prime }$, which
we call $D_{1}^{\prime }$. Let $D_{2}^{\prime }$ be the other diagonal of $%
Q^{\prime } $. Let $L_{2}^{\prime }$ be the diameter of $E_{0}^{\prime }$
perpendicular to $L_{1}^{\prime }$, which implies that $L_{1}^{\prime }$ and 
$L_{2}^{\prime }$ are conjugate diameters since $E_{0}^{\prime }$ is a
circle. By Lemma \ref{L2}, $T^{-1}$ maps $L_{2}^{\prime }$ to $L_{2}$, a
diameter of $E_{0}$ conjugate to $L_{1}$. To prove (i), suppose that $Q$ is
a midpoint diagonal quadrilateral. By Lemma\textbf{\ }\ref{L13}, $Q^{\prime
} $ is also a midpoint diagonal quadrilateral. By Lemma \ref{L3}, $Q^{\prime
}$ is an orthodiagonal quadrilateral, which implies that $D_{1}^{\prime
}\perp D_{2}^{\prime }$. Since $L_{1}^{\prime }\parallel D_{1}^{\prime
},L_{1}^{\prime }\perp L_{2}^{\prime }$, and $D_{1}^{\prime }\perp
D_{2}^{\prime }$, $L_{2}^{\prime }$ must be parallel to $D_{2}^{\prime }$,
which implies that $L_{2}$ is parallel to $D_{2}$ since $T^{-1}$ is an
affine transformation. That proves (i). To prove (ii), suppose that $Q$ is 
\textbf{not} a midpoint diagonal quadrilateral. Since $Q^{\prime }$ is
tangential, if $Q^{\prime }$ were also an orthodiagonal quadrilateral, then
by Lemma \ref{L5}, $Q^{\prime }$ would be a midpoint diagonal quadrilateral.
Hence $Q^{\prime }$ cannot be an orthodiagonal quadrilateral, which implies
that $D_{1}^{\prime }$ is not perpendicular to $D_{2}^{\prime }$. Now if $%
L_{2}^{\prime }$ were parallel to $D_{2}^{\prime }$, then it would follow
that $D_{1}^{\prime }\perp D_{2}^{\prime }$ since $L_{1}^{\prime }\parallel
D_{1}^{\prime }$ and $L_{1}^{\prime }\perp L_{2}^{\prime }$, a
contradiction. Hence $L_{2}^{\prime }\nparallel D_{2}^{\prime }$, which
implies that $L_{2}\nparallel D_{2}$ since $T$ is an affine transformation.
That proves (ii).
\end{proof}

\textbf{Equal Conjugate Diameters and the Ellipse of Minimal Eccentricity}

By Theorem \ref{T1}(i), each ellipse inscribed in a midpoint diagonal
quadrilateral, $Q$, has conjugate diameters parallel to the diagonals of $Q$%
. In particular, this holds for the unique ellipse of minimal
eccentricity(whose existence we prove below), $E_{I}$, inscribed in $Q$.
However, in Theorem \ref{T3}(ii) below, we prove a stronger result for $%
E_{I} $.

\begin{theorem}
\label{T3}

(i) There is a unique ellipse of minimal eccentricity, $E_{I}$, \textit{%
inscribed} in a midpoint diagonal quadrilateral, $Q$.

(ii) Furthermore, the unique pair of conjugate diameters parallel to the
diagonals of $Q$ are \textbf{equal} conjugate diameters of $E_{I}$.
\end{theorem}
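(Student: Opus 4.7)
I would prove (i) and (ii) together by showing that $E_{I}$ is precisely the unique inscribed ellipse whose conjugate diameters parallel to $D_{1}$ and $D_{2}$ have equal length. For any ellipse $E$ inscribed in $Q$, Theorem~\ref{T1}(i) supplies unique conjugate diameters $L_{1}\parallel D_{1}$ and $L_{2}\parallel D_{2}$; write $p=|L_{1}|/2$, $r=|L_{2}|/2$, let $a\ge b$ be the semi--axes of $E$, and let $\theta\in(0,\pi)$ be the (fixed) angle between $D_{1}$ and $D_{2}$. The two theorems of Apollonius give $p^{2}+r^{2}=a^{2}+b^{2}$ and $pr\sin\theta=ab$; combining them with $(a^{2}-b^{2})^{2}=(a^{2}+b^{2})^{2}-4a^{2}b^{2}$ one obtains
\[
e(E)^{2}\;=\;\frac{2\lambda}{1+\lambda},\qquad \lambda\;=\;\frac{\sqrt{(p^{2}+r^{2})^{2}-4p^{2}r^{2}\sin^{2}\theta}}{p^{2}+r^{2}}.
\]
Since $4p^{2}r^{2}\le(p^{2}+r^{2})^{2}$ with equality iff $p=r$, one has $\lambda\ge|\cos\theta|$ with equality iff $|L_{1}|=|L_{2}|$; as $\lambda\mapsto 2\lambda/(1+\lambda)$ is strictly increasing, $e(E)$ is minimized over the inscribed family precisely at those $E$ with $|L_{1}(E)|=|L_{2}(E)|$, the common minimum value being $\sqrt{2|\cos\theta|/(1+|\cos\theta|)}$.

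For existence I would parameterise the inscribed family by $q\in J$ via Proposition~\ref{P1} (after the affine reduction of Proposition~\ref{P4}, or via Proposition~\ref{P3} in the parallelogram case). As $q\to 0^{+}$ or $q\to 1^{-}$ the four tangency points of $E_{q}$ collapse onto a pair of opposite vertices of $Q$, so $E_{q}$ degenerates to a segment along one of the diagonals and $e(q)\to 1$. By the formula above, this forces $|L_{1}(q)|/|L_{2}(q)|$ to tend to $\infty$ at one endpoint and to $0$ at the other; continuity and the intermediate value theorem then produce some $q^{*}\in J$ with $|L_{1}(q^{*})|=|L_{2}(q^{*})|$. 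The corresponding ellipse $E_{I}:=E_{q^{*}}$ realises the lower bound of the previous paragraph, yielding existence in (i) together with the equal--conjugate--diameters property in (ii).

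For uniqueness of $q^{*}$ (hence of $E_{I}$), the condition $|L_{1}(q)|^{2}=|L_{2}(q)|^{2}$, with the lengths measured in $Q$'s own Euclidean metric, becomes an algebraic equation in $q$. I would verify by direct computation, separately in the Type~1 case ($s=t$), the Type~2 case ($s+t=2$), and the parallelogram case, that this equation has exactly one root in $J$. The main obstacle is this calculation: since $|L_{i}|$ and $e$ are Euclidean rather than affine invariants, the reduction to $Q_{s,t}$ only supplies the one--parameter family and the ellipse equation~(\ref{elleq3}); the polynomial must be written in the original metric of $Q$. The cleanest route is to first place $Q$ in an isometric canonical form --- bisected diagonal on the $x$--axis, intersection of diagonals at the origin --- and then extract the conjugate--diameter lengths from the ellipse's coefficients in the spirit of Lemma~\ref{L4}.
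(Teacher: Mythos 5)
Your route is genuinely different from the paper's, and the part you actually carry out is correct: applying the two theorems of Apollonius to the conjugate diameters $L_{1}\parallel D_{1}$, $L_{2}\parallel D_{2}$ guaranteed by Theorem \ref{T1}(i) does give $e(E)^{2}=2\lambda/(1+\lambda)$ with $\lambda\geq\left\vert \cos \theta \right\vert$ and equality exactly when $\left\vert L_{1}\right\vert =\left\vert L_{2}\right\vert$; since $\theta$, the angle between the diagonals, is the same for every inscribed ellipse, this cleanly identifies the minimizers of eccentricity with the ellipses having equal conjugate diameters parallel to the diagonals. That unifies (i) and (ii) into a single statement and even yields the closed form $e(E_{I})=\sqrt{2\left\vert \cos \theta \right\vert /(1+\left\vert \cos \theta \right\vert )}$, which the paper never observes. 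The paper instead minimizes $b^{2}/a^{2}=G(r)$ directly via Lemma \ref{L4} and the parameterization of Proposition \ref{P2}, locates the unique critical point as the root in $J$ of the explicit quadratic $\alpha$ in (\ref{q}), and only afterwards verifies (ii) by a separate chord--length computation which, pleasingly, reduces to the same condition $\alpha(r_{1})=0$.

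There is, however, a genuine gap: uniqueness in (i) is precisely the assertion that $\left\vert L_{1}(q)\right\vert ^{2}=\left\vert L_{2}(q)\right\vert ^{2}$ has exactly one solution in $J$, and you explicitly defer this (``I would verify by direct computation...'') without doing it. That computation is where all the real work lives; in the paper it amounts to reducing $Q$ by an isometry and a scaling to $Q_{s,t,v,w}$ (eccentricity is not affine--invariant, as you correctly note), and showing that the relevant polynomial factors as a linear factor with no zero in $J$ times a quadratic $\alpha$ satisfying $\alpha(0)<0<\alpha(1)$. The chord--length computation in the paper's proof of (ii) is essentially the calculation your program requires, so the approach is completable, but as written it is not complete. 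Your existence step also leans on an unjustified directional claim: from $e(q)\rightarrow 1$ alone one only gets that $\left\vert L_{1}\right\vert /\left\vert L_{2}\right\vert$ tends to $0$ or to $\infty$ at each endpoint, not that it does one at one end and the other at the other end; to rule out the ratio blowing up at both ends you need the tangency--point limits of Proposition \ref{P1}, which show the ellipse collapses onto $D_{1}$ as $q\rightarrow 0^{+}$ and onto $D_{2}$ as $q\rightarrow 1^{-}$. Neither issue is a wrong turn, but as it stands the proposal establishes the equivalence ``minimal eccentricity $\iff$ equal conjugate diameters'' while leaving both the existence and, more seriously, the uniqueness of such an ellipse asserted rather than proved.
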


\begin{remark}
\label{Re1}Suppose that $Q$\ is a type 1 midpoint diagonal quadrilateral and
let $CD_{1}$\textbf{\ }and\textbf{\ }$CD_{2}$ be the \textit{equal}
conjugate diameters in Theorem \ref{T3}(ii) parallel to the diagonals, $%
D_{1} $\ and $D_{2}$. Let $\overleftrightarrow{CD_{1}}$ and $%
\overleftrightarrow{CD_{2}}$ denote the lines containing $CD_{1}$\textbf{\ }%
and\textbf{\ }$CD_{2} $, respectively. Since $\overleftrightarrow{CD_{1}}$
is parallel to $\overleftrightarrow{D_{1}}$ and $\overleftrightarrow{D_{1}}%
=L $, $\overleftrightarrow{CD_{1}}$ is parallel to $L$. Since $L$ and $%
\overleftrightarrow{CD_{1}}$ each pass through the center of $E_{I},%
\overleftrightarrow{CD_{1}}=L$. Similarly, for type 2 midpoint diagonal
quadrilaterals, $\overleftrightarrow{CD_{2}}=L$.
\end{remark}

\begin{remark}
Theorem \ref{T3}(ii) cannot hold if $Q$ is \textbf{not} a midpoint diagonal
quadrilateral, since in that case no ellipse inscribed in $Q$ has conjugate
diameters parallel to the diagonals of $Q$ by Theorem \ref{T1}(ii). But
Theorem \ref{T3}(ii) implies the following weaker result: The smallest
nonnegative angle between equal conjugate diameters of $E_{I}$ equals the
smallest nonnegative angle between the diagonals of $Q$ when $Q$ is a
midpoint diagonal quadrilateral. This was proven in \cite{H4} for
parallelograms. We do not know if this property of $E_{I}$ can hold if $Q$
is \textbf{not} a midpoint diagonal quadrilateral.
\end{remark}

Before proving Theorem \ref{T3}, we need several preliminary results. We
omit the details for the proof of Theorem \ref{T3} when $Q$ is a
parallelogram. So suppose that $Q$\ is a midpoint diagonal quadrilateral
which is \textbf{not }a parallelogram. By using an \textbf{isometry} of the
plane, we may assume that $Q$ has vertices $(0,0),(0,u),(s,t)$, and $(v,w)$,
where $s,v,u>0,t>w$. To obtain this isometry, first, if $Q$ has a pair of
parallel vertical sides, first rotate counterclockwise by $90^{\circ }$,
yielding a quadrilateral with parallel horizontal sides. Since we are
assuming that $Q$ is not a parallelogram, we may then also assume that $Q$
does not have parallel vertical sides. One can now use a translation, if
necessary, to map the lower left hand corner vertex of $Q$ to $(0,0)$.
Finally a rotation, if necessary, yields vertices $(0,0),(0,u),(s,t)$, and $%
(v,w)$. Note that such a rotation leaves $Q$ without parallel vertical
sides. In addition, by Lemma \ref{eccinv} with $T(x,y)=\dfrac{1}{u}\left(
x,y\right) $, we may also assume that one of the vertices of $Q$ is $(0,1)$.
So we now work with the quadrilateral 
\begin{gather}
Q_{s,t,v,w}=Q(A_{1},A_{2},A_{3},A_{4}),A_{1}=(0,0),  \label{Q} \\
A_{2}=(0,1),A_{3}=(s,t),A_{4}=(v,w)\text{,}  \notag
\end{gather}%
where 
\begin{equation}
s,v>0,t>w,s\neq v\text{.}  \label{R0}
\end{equation}

The sides of $Q_{s,t,v,w}$, going clockwise, are given by $S_{1}=\overline{%
(0,0)\ (0,u)},S_{2}=\overline{(0,1)\ (s,t)},S_{3}=\overline{(s,t)\ (v,w)}$,
and $S_{4}=\overline{(0,0)\ (v,w)}$. By Lemma \ref{L13}, $Q_{s,t,v,w}$ is a
midpoint diagonal quadrilateral, which implies, by Lemma \ref{mdqtrap}, that 
$Q_{s,t,v,w}$\ is \textbf{not }a trapezoid since $Q_{s,t,v,w}$\ is \textbf{%
not }a parallelogram. We find it useful to define the following expressions,
each of which depend on $s,t,v$, and $w$: 
\begin{eqnarray}
f_{1} &=&v(t-1)+(1-w)s,  \notag \\
f_{2} &=&vt-ws,  \label{f123} \\
f_{3} &=&ws-v(t-1)\text{.}  \notag
\end{eqnarray}

\textbullet\ Since $Q_{s,t,v,w}$\ is convex, $(s,t)$ must lie above $%
\overleftrightarrow{(0,1)\ (v,w)}$\ and $(v,w)$ must lie below $%
\overleftrightarrow{(0,0)\ (s,t)}$, which implies that 
\begin{equation}
f_{1}>0\text{ and }f_{2}>0\text{.}  \label{R1}
\end{equation}%
Since no two sides of $Q_{s,t,v,w}$\ are parallel, $S_{2}\nparallel S_{4}$,
which implies that 
\begin{equation}
f_{3}\neq 0\text{.}  \label{R2}
\end{equation}

$M_{1}=\left( \dfrac{1}{2}v,\dfrac{1}{2}(w+1)\right) $ and $M_{2}=\left( 
\dfrac{1}{2}s,\dfrac{1}{2}t\right) $ are the midpoints of the diagonals of $%
Q_{s,t,v,w}$\ and the equation of the line thru $M_{1}$ and $M_{2}$ is

\begin{eqnarray}
y &=&L(x)=\dfrac{t}{2}+\dfrac{w+1-t}{v-s}\left( x-\dfrac{s}{2}\right) ,
\label{L} \\
x &\in &I=\left\{ 
\begin{array}{ll}
(v/2,s/2) & \text{if }v<s \\ 
(s/2,v/2) & \text{if }s<v%
\end{array}%
\right. \text{.}  \notag
\end{eqnarray}%
The diagonal line segments of $Q_{s,t,v,w}$ are $D_{1}=\overline{(0,0)\ (s,t)%
}$ and $D_{2}=\overline{(0,1)\ (v,w)}$. Now let $E_{0}$ be an ellipse
inscribed in $Q_{s,t,v,w}\ $and suppose that $E_{0}$ is tangent to $%
Q_{s,t,v,w}$ at the points $P_{q}=\left( q,\dfrac{w}{v}q\right) \in $ $S_{4}$
and $P_{r}=(0,r)\in S_{1}$, $0<q<v$, $r\in J$. Using these points of
tangency, it is not hard to show that $q=\dfrac{svr}{\allowbreak \left(
s-f_{2}\right) r+f_{2}}$. That leads to Proposition \ref{P2} below, which
gives necessary and sufficient conditions for the general equation of an
ellipse inscribed in $Q_{s,t,v,w}$. It is useful for us to emphasize the
dependence of the coefficients of the general equation on the parameter $r$
in our notation.

\begin{proposition}
\label{P2}Suppose that (\ref{R0}), (\ref{R1}), and (\ref{R2}) hold.

(i) Let $E_{0}$ be an ellipse inscribed in $Q_{s,t,v,w}$. Then for some $%
r\in J$, the general equation of $E_{0}$ is given by $\psi (x,y)=0$, where \ 
\begin{equation}
\psi (x,y)=A(r)x^{2}+B(r)xy+C(r)y^{2}+D(r)x+E(r)y+F(r)\text{,}  \label{psi}
\end{equation}%
and 
\begin{gather}
A(r)={\large (}s^{2}+v^{2}t^{2}+w^{2}s^{2}-2tvs(w+1)+2ws(2v-s){\large )}%
r^{2}+  \notag \\
2v\left( st-2ws-t^{2}v+tsw\right) r+t^{2}v^{2},  \notag \\
B(r)=-2vs{\large (}2r^{2}(v-s)+rs(w+1)+v(t-rt-2r){\large ),}  \label{1} \\
C(r)=v^{2}s^{2},D(r)=2srv{\large (}-rs(w+1)+2ws+tv(r-1){\large ),}  \notag \\
E(r)=-2rv^{2}s^{2},F(r)=r^{2}s^{2}v^{2}\text{.}  \notag
\end{gather}

(ii) Conversely, if for some $r\in J$ the general equation of $E_{0}$ is
given by $\psi (x,y)=0$, where (\ref{psi}) and (\ref{1}) hold, then $E_{0}$
is an ellipse inscribed in $Q_{s,t,v,w}$.
\end{proposition}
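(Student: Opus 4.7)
The plan is to mirror the derivation of (\ref{elleq3}) for $Q_{s,t}$, carried out now for the more general vertex $(v,w)$, and to parameterize the inscribed ellipses in $Q_{s,t,v,w}$ by the tangency point $P_r=(0,r)$ on the vertical side $S_1$, where $r\in J$.

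For (i), let $E_0$ be an ellipse inscribed in $Q_{s,t,v,w}$. By Theorem \ref{Newton} the center of $E_0$ has the form $(h,L(h))$ with $h\in I$, so I write $E_0$ in centered form
\[
(x-h)^2 + B(x-h)(y-L(h)) + C(y-L(h))^2 + F = 0.
\]
If $P_r=(0,r)\in S_1$ and $P_q=(q,wq/v)\in S_4$ are the tangency points, then evaluating this equation and its implicit derivative at each of $P_r$ and $P_q$ produces four scalar relations, exactly analogous to (\ref{uv})--(\ref{B}), which express $B$, $C$, $F$, and one of $r,q$ in terms of $h$. The identification $q = svr/\bigl((s-f_2)r + f_2\bigr)$ recorded in the text before the proposition comes from applying Marden's theorem to the triangle spanned by $A_1$, $A_2$, and the intersection of $\overleftrightarrow{S_2}$ with the $y$-axis, exactly as in the derivation of (\ref{hz}). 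Inverting this relation gives $h$ as an explicit rational function of $r$. Substituting this $h(r)$ back into the centered form and then multiplying through by the factor needed to make the coefficient of $y^2$ equal $v^2s^2$ yields (\ref{psi}) with the coefficients listed in (\ref{1}).

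For (ii), I would argue in two steps. First, using Lemma \ref{L1} together with the standing hypotheses (\ref{R0})--(\ref{R2}) and $r\in J$, verify that $A(r),C(r)>0$, $\Delta(r)>0$, and $\delta(r)>0$, so that $\psi(x,y)=0$ does define a genuine ellipse. Second, for each of the four sides $S_j$, substitute the parametrization of $S_j$ into $\psi$ and check that the resulting quadratic has a double root lying in the appropriate edge. The tangencies on $S_1$ (the $y$-axis) and $S_4$ (the line $wx=vy$) are immediate from the explicit shape of $C(r),D(r),E(r),F(r)$, since plugging $x=0$ already gives a perfect square $v^2s^2(y-r)^2$. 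The tangencies on $S_2$ and $S_3$ are forced by the Marden-theoretic identification used to produce (\ref{1}), but must still be checked by computing the discriminant explicitly.

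The principal obstacle is purely algebraic. The coefficients in (\ref{1}) are quadratic in $r$ with multilinear cross terms in $s,t,v,w$, and the discriminants produced when testing tangency to $S_2$ and $S_3$ simplify only after systematic factoring in terms of $f_1,f_2,f_3$ from (\ref{f123}). I would handle this by performing the substitutions symbolically and exhibiting the factorizations that show each discriminant vanishes identically in $r$, with the double root lying in the interior of the relevant side; the convexity constraints $f_1,f_2>0$ then force that root into the correct interval.
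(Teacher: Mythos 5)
Your proposal follows essentially the same route as the paper, whose own proof is only a sketch: part (ii) via Lemma \ref{L1} plus a side--by--side tangency verification, and part (i) by redoing the derivation given for $Q_{s,t}$ (centered form, tangency relations, Marden's theorem) with the vertex $(1,0)$ replaced by $(v,w)$, which is exactly what the paper means by ``in a similar fashion to the proof of Proposition \ref{P1}.'' The one slip is your choice of Marden triangle --- the triangle spanned by $A_{1}$, $A_{2}$, and the intersection of $\overleftrightarrow{S_{2}}$ with the $y$--axis is degenerate, since $A_{2}$ already lies on both $S_{2}$ and the $y$--axis; the correct triangle, as in the paper's treatment of $Q_{s,t}$, uses two adjacent vertices together with the intersection of two \emph{extended} sides, but this detail is deferred to \cite{H2} in the paper as well.
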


\begin{proof}
Using Lemma \ref{L1}, it is not hard to show that $\psi (x,y)=0$ defines the
equation of an ellipse for any $r\in J$. Using standard calculus techniques,
it is also not difficult to show that the ellipse defined by $\psi (x,y)=0$
is inscribed in $Q_{s,t,v,w}$ for any $r\in J$. The converse result, that
any ellipse inscribed in $Q_{s,t,v,w}$ has equation given by $\psi (x,y)=0$
for some $r\in J$ can be proven in a similar fashion to the proof of
Proposition \ref{P1}. We leave the details to the reader.
\end{proof}

The following lemma gives necessary and sufficient conditions for $%
Q_{s,t,v,w}$ to be a midpoint diagonal quadrilateral.

\begin{lemma}
\label{L14}Suppose that (\ref{R0}), (\ref{R1}), and (\ref{R2}) hold.

(i) $Q_{s,t,v,w}$ is a type 1 midpoint diagonal quadrilateral if and only if 
\begin{equation}
vt=(w+1)s\text{.}  \label{v1}
\end{equation}

(ii) $Q_{s,t,v,w}$ is a type 2 midpoint diagonal quadrilateral if and only
if 
\begin{equation}
(t-2)v=(w-1)s\text{.}  \label{v2}
\end{equation}
\end{lemma}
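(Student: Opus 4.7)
The plan is to compute the intersection point $P$ of the two diagonal lines $\overleftrightarrow{D_{1}}$ and $\overleftrightarrow{D_{2}}$, and then impose $P = M_{1}$ for part (i) and $P = M_{2}$ for part (ii), where $M_{1}$ and $M_{2}$ are the midpoints given in the excerpt. In each case the resulting equality should reduce, after a short calculation, to the stated algebraic condition.

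First I would parametrize the diagonal lines: $\overleftrightarrow{D_{1}}$ has equation $y = (t/s)x$ (which is valid since $s>0$), and $\overleftrightarrow{D_{2}}$ has equation $(1-w)x + vy = v$. Solving these simultaneously yields
\[
P = \left(\frac{vs}{s+f_{2}},\, \frac{vt}{s+f_{2}}\right),
\]
where $f_{2} = vt - ws$ as in (\ref{f123}). Note that $s + f_{2} = f_{1} + v > 0$ by (\ref{R0}) and (\ref{R1}), so $P$ is well-defined.

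For (i), $Q_{s,t,v,w}$ is a type 1 midpoint diagonal quadrilateral precisely when $P = M_{1} = (v/2,(w+1)/2)$. Since both $P$ and $M_{1}$ lie on $\overleftrightarrow{D_{2}}$ (the first by construction, the second because $M_{1}$ is the midpoint of $D_{2}$), and $\overleftrightarrow{D_{2}}$ is not vertical (as $v > 0$), equality of the $x$-coordinates already forces $P = M_{1}$. Setting $vs/(s+f_{2}) = v/2$ and cancelling $v > 0$ gives $2s = s + f_{2}$, i.e., $s = vt - ws$, which is exactly (\ref{v1}). For (ii), $Q_{s,t,v,w}$ is a type 2 midpoint diagonal quadrilateral precisely when $P = M_{2} = (s/2, t/2)$. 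Now both points lie on $\overleftrightarrow{D_{1}}$, which is also non-vertical, so matching $x$-coordinates suffices: $vs/(s+f_{2}) = s/2$ with $s>0$ gives $2v = s + f_{2} = s + vt - ws$, and this rearranges directly to $v(t-2) = s(w-1)$, which is (\ref{v2}).

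There is no substantive obstacle here; the proof is a short linear-algebra calculation. The only minor care needed is in justifying that $s + f_{2} \neq 0$ and that the relevant diagonal line used to reduce a two-coordinate condition to a one-coordinate condition is non-vertical, both of which follow from $s,v>0$ together with (\ref{R1}).
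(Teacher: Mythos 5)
Your proof is correct, and it takes a somewhat different route from the paper's. The paper never computes the intersection point of the diagonals in this proof; instead it uses the reformulation (noted in the introduction for non-parallelograms) that $Q_{s,t,v,w}$ is type 1 exactly when $\overleftrightarrow{D_1}$ coincides with the line $L$ of (\ref{L}) through the midpoints of the diagonals, and then equates the slope and intercept of $y=\tfrac{t}{s}x$ with those of $L$, obtaining the two conditions (\ref{19}), each of which reduces to (\ref{v1}); part (ii) is dispatched ``in a similar fashion.'' You instead argue straight from the definition of type 1 and type 2, computing the diagonals' intersection $P=\left(\tfrac{vs}{s+f_2},\tfrac{vt}{s+f_2}\right)$ and imposing $P=M_1$ or $P=M_2$ --- which is in fact the same method the paper itself uses for the analogous Lemma \ref{L6} about $Q_{s,t}$. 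Your version is self-contained in that it does not lean on the $L$-characterization, and the two points you flag --- that $s+f_2=f_1+v>0$ by (\ref{R0}) and (\ref{R1}), and that $P$ and the relevant midpoint already lie on a common non-vertical diagonal line so that matching $x$-coordinates suffices --- dispose of the only possible degeneracies. The algebra checks out in both parts: $2s=s+f_2$ gives $vt=(w+1)s$, and $2v=s+vt-ws$ rearranges to $(t-2)v=(w-1)s$.
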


\begin{proof}
$\overleftrightarrow{D_{1}}$ has equation $y=\dfrac{t}{s}x$. Using (\ref{L}%
), $\overleftrightarrow{D_{1}}=L\iff $%
\begin{gather}
\dfrac{w+1-t}{v-s}=\dfrac{t}{s}\ \text{and}  \label{19} \\
\dfrac{t}{2}-\dfrac{s}{2}\dfrac{w+1-t}{v-s}=0\text{.}  \notag
\end{gather}%
It follows easily that (\ref{19}) holds if and only if (\ref{v1}) holds,
which proves (i). The proof of (ii) follows in a similar fashion.
\end{proof}

\textbf{Proof of Theorem \ref{T3}}

\begin{proof}
We assume first that $Q$ is a \textit{tangential} quadrilateral. Then $Q$ is
an orthodiagonal quadrilateral by Lemma\textbf{\ }\ref{L3}, and so the
diagonals of $Q$ are perpendicular. Also, there is a unique circle, $\Phi $,
inscribed in $Q$, which implies that $\Phi $ is the unique ellipse of
minimal eccentricity inscribed in $Q$ since $\Phi $ has eccentricity $0$.
Since any pair of perpendicular diameters of a circle are equal conjugate
diameters, in particular the unique pair which are parallel to the diagonals
of $Q$ are equal conjugate diameters of $\Phi $, and Theorem \ref{T3} holds.
So assume now that $Q$ is \textbf{not }a tangential quadrilateral. It
suffices to assume that $Q=Q_{s,t,v,w}$ and that (\ref{R0}), (\ref{R1}), and
(\ref{R2}) hold. Let $E_{0}$ be an ellipse inscribed in $Q_{s,t,v,w}$. By
Proposition \ref{P4}, the general equation of $E_{0}$ is given by $\psi
(x,y)=0$, where $\psi $ is given by (\ref{psi}) and (\ref{1}) for some $r\in
J$. Let $a$ and $b$ denote the lengths of the semi--major and semi--minor
axes, respectively, of $E_{0}$. Now $\dfrac{b^{2}}{a^{2}}$ is really a
function of $r\in J\ $if we allow $E_{0}$\ to vary over all ellipses
inscribed in $Q_{s,t,v,w}$. By (\ref{eccab}) in Lemma \ref{L4}, $\dfrac{b^{2}%
}{a^{2}}=G(r)$, where $G(r)=\dfrac{A(r)+C(r)-\sqrt{{\large (}A(r)-C(r)%
{\large )}^{2}+{\large (}B(r){\large )}^{2}}}{A(r)+C(r)+\sqrt{{\large (}%
A(r)-C(r){\large )}^{2}+{\large (}B(r){\large )}^{2}}}$. Since the square of
the eccentricity of $E_{0}$ equals $1-\dfrac{b^{2}}{a^{2}}$, it suffices to
maximize $\dfrac{b^{2}}{a^{2}}$. Letting 
\begin{eqnarray}
O(r) &=&A(r)+C(r),  \label{Om} \\
M(r) &=&{\large (}A(r)-C(r){\large )}^{2}+{\large (}B(r){\large )}^{2}\text{,%
}  \notag
\end{eqnarray}%
we have $G(r)=\dfrac{O(r)-\sqrt{M(r)}}{O(r)+\sqrt{M(r)}}$. Since $Q$ is not%
\textbf{\ }a tangential quadrilateral, it follows easily that $Q_{s,t,v,w}$
is also not\textbf{\ }a tangential quadrilateral. If $M(r_{0})=0$ for some $%
r_{0}\in J$, then $A(r_{0})-C(r_{0})=0$ and $B(r_{0})=0$, which implies that
the ellipse inscribed in $Q_{s,t,v,w}$ corresponding to $r_{0}$ is a circle.
But that contradicts the assumption that $Q_{s,t,v,w}$ is not\textbf{\ }a
tangential quadrilateral. Thus $M(r)\neq 0$ for all $r\in J$. Since $M$ is
non--negative, it follows that 
\begin{equation}
M(r)>0,r\in J\text{.}  \label{31}
\end{equation}%
Define the quartic polynomial $\allowbreak $%
\begin{equation*}
N(r)=O^{2}(r)-M(r)\text{.}
\end{equation*}%
After some simplification, $N$ factors as 
\begin{equation}
N(r)=16s^{2}v^{2}r\left( 1-r\right) {\large (}(s-v)r+v{\large )(}(s-v)r+f_{2}%
{\large )}\text{,}  \label{Nfact}
\end{equation}%
and $N$ has roots 
\begin{equation}
r_{1}=0,r_{2}=1,r_{3}=\dfrac{f_{2}}{v-s},r_{4}=\dfrac{v}{v-s}\text{.}
\label{Nroots}
\end{equation}

Note that $r_{3}=r_{4}\iff f_{3}=0$, which cannot hold by (\ref{R2}), and $%
r_{3}\neq 0\neq r_{4}$ since $v\neq 0\neq f_{2}$ by (\ref{R0}) and (\ref{R1}%
). $r_{3}=1\iff f_{1}=0$, which cannot hold by (\ref{R1}), and $r_{4}=1\iff
s=0$, which cannot hold by (\ref{R0}). Thus all roots listed in (\ref{Nroots}%
) are \textit{distinct}. A simple computation yields%
\begin{equation}
G^{\prime }(r)=\dfrac{p(r)}{{\large (}O(r)+\sqrt{M(r)}{\large )}^{2}\sqrt{%
M(r)}}\text{,}  \label{dG}
\end{equation}

where the quartic polynomial $p$ is given by 
\begin{equation}
p(r)=2M(r)O^{\prime }(r)-O(r)M^{\prime }(r)\text{.}  \label{p}
\end{equation}
\end{proof}

\qquad To finish the proof of Theorem \ref{T3}, the following lemmas will be
used to show that $p$ has a unique root in $J$.

\begin{lemma}
\label{N}$N(r)>0\ $on $J$.
\end{lemma}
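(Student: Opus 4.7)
The plan is to exploit the factorization (\ref{Nfact}) directly: show that each of the five factors in
$$N(r) = 16s^{2}v^{2}\cdot r\cdot (1-r)\cdot \bigl((s-v)r+v\bigr)\cdot \bigl((s-v)r+f_{2}\bigr)$$
is strictly positive on $J=(0,1)$. The first factor is positive by (\ref{R0}), and the second and third factors $r$ and $1-r$ are obviously positive on $J$. So the real content is to rule out sign changes in the two remaining linear factors on $J$; equivalently, to show that the two roots $r_{3}=f_{2}/(v-s)$ and $r_{4}=v/(v-s)$ of $N$ both lie outside $[0,1]$.

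For $r_{4}=v/(v-s)$: I would split on the sign of $v-s$ (nonzero by (\ref{R0})). If $v<s$, then $r_{4}<0$ since $v>0$. If $v>s$, then $r_{4}>0$, and $r_{4}<1$ would force $v<v-s$, i.e. $s<0$, contradicting $s>0$; hence $r_{4}>1$. For $r_{3}=f_{2}/(v-s)$: by (\ref{R1}) we have $f_{2}>0$, so if $v<s$ then $r_{3}<0$ immediately. If $v>s$, then $r_{3}>0$, and the key observation is that
$$f_{2}-(v-s) \;=\; vt-ws-v+s \;=\; v(t-1)+s(1-w) \;=\; f_{1} \;>\; 0$$
by (\ref{R1}), so $f_{2}>v-s>0$ and hence $r_{3}>1$. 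This dispatches both roots.

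Having shown that neither linear factor vanishes on $[0,1]$, each has constant sign there, which I read off at $r=0$: the factor $(s-v)r+v$ equals $v>0$, and $(s-v)r+f_{2}$ equals $f_{2}>0$. (As a sanity check, evaluation at $r=1$ yields $s>0$ and $s-v+f_{2}=f_{1}>0$, consistent with (\ref{R1}).) Multiplying the five strictly positive factors gives $N(r)>0$ on $J$, as required.

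The main obstacle is really just the bookkeeping for $r_{3}$: one has to notice the identity $f_{2}-(v-s)=f_{1}$ in order to use (\ref{R1}) to push $r_{3}$ past $1$ in the case $v>s$; the rest of the sign analysis is routine. Because conditions (\ref{R0})--(\ref{R2}) were set up precisely to encode convexity and non-degeneracy of $Q_{s,t,v,w}$, every sign needed here is already guaranteed and the proof reduces to the four linear inequalities above.
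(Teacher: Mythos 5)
Your proof is correct and is essentially the paper's argument: both rest on the factorization (\ref{Nfact}) together with the observation that the two linear factors equal $f_{2}$ and $v$ at $r=0$ and $f_{1}$ and $s$ at $r=1$ (your identity $f_{2}-(v-s)=f_{1}$ is exactly the paper's evaluation $L(1)=f_{1}$). The paper simply notes that a linear function positive at both endpoints is positive on the interval, whereas you reach the same conclusion by locating the roots $r_{3},r_{4}$ outside $[0,1]$; the difference is purely presentational.
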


\begin{proof}
First define the linear function of $r,L(r)=(s-v)r+f_{2}$. $L(0)=f_{2}>0$
and $L(1)=$\ $f_{1}>0$ by (\ref{R1}) and thus $L>0\ $on $J$. Similarly, $%
(s-v)r+v>0$ on $J$. By (\ref{Nfact}), $N>0$ on $J\ $since $r\left(
1-r\right) >0$ on $J$.
\end{proof}

\begin{lemma}
\label{O}$O(r)>0$ on $J$.
\end{lemma}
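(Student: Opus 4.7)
The plan is to deduce Lemma \ref{O} from Lemma \ref{N} together with a single endpoint evaluation, rather than trying to prove positivity of the quadratic $A(r)$ directly (which does not have an obviously positive leading coefficient).

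First I would exploit the algebraic identity built into the definitions. By (\ref{Om}), $M(r) = (A(r)-C(r))^{2}+(B(r))^{2}$ is a sum of two squares, hence $M(r)\ge 0$ for every $r\in\mathbb{R}$. Also, by the definition of $N$ just before (\ref{Nfact}), $N(r)=O(r)^{2}-M(r)$. Lemma \ref{N} tells us $N(r)>0$ on $J$, so
\begin{equation*}
O(r)^{2} \;=\; N(r)+M(r) \;>\; 0 \quad \text{for every } r\in J,
\end{equation*}
which forces $O(r)\ne 0$ on $J$.

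Next I would pin down the sign by a direct evaluation at the left endpoint. From (\ref{1}), $A(0)=t^{2}v^{2}$ and $C(r)\equiv v^{2}s^{2}$, so
\begin{equation*}
O(0)=A(0)+C(0)=v^{2}(s^{2}+t^{2}),
\end{equation*}
which is strictly positive by (\ref{R0}) (since $s,v>0$). Because $O$ is a polynomial in $r$, it is continuous on $[0,1]$, and a sign change on $J$ would produce a zero in $J$ by the intermediate value theorem, contradicting Step 1. Therefore $O(r)>0$ throughout $J$.

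The only potentially delicate point is making sure nothing circular is invoked: Lemma \ref{N}'s proof uses only the factorization (\ref{Nfact}) and the inequalities (\ref{R1}), not Lemma \ref{O}, so the chain is clean. No serious obstacle is expected—the argument is a one-line consequence of the sum-of-squares structure of $M$ combined with the preceding lemma.
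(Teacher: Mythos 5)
Your proposal is correct and follows essentially the same route as the paper: both deduce $O(r)\neq 0$ on $J$ from $N(r)=O(r)^{2}-M(r)>0$ (Lemma \ref{N}) together with $M\geq 0$, and then fix the sign via the endpoint value $O(0)=(s^{2}+t^{2})v^{2}>0$ and continuity. Your write-up merely makes the intermediate value argument a bit more explicit than the paper does.
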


\begin{proof}
If $O(r_{0})=0$ for some $r_{0}\in J$, then $N(r_{0})=-M(r_{0})\leq 0$ since 
$M(r)\geq 0$ on $J$. That contradicts Lemma \ref{N}. Hence $O(r)$ is nonzero
on $J$. Since $O(0)=\allowbreak \left( s^{2}+t^{2}\right) v^{2}>0$, that
proves Lemma \ref{O}.

Now assume that $Q_{s,t,v,w}$ is a type 1 midpoint diagonal quadrilateral.
In \cite{H2} we proved that there is a unique ellipse of minimal
eccentricity inscribed in any convex quadrilateral, $Q$. The uniqueness for
midpoint diagonal quadrilaterals would then follow from that result.
However, the proof here, specialized for midpoint diagonal quadrilaterals,
is self--contained, uses different methods, and does not require the result
from \cite{H2}. Use (\ref{v1}) to substitute $\dfrac{s\left( w+1\right) }{v}$
for $t$ in Proposition \ref{P2}. Some simplification then yields $%
A(r)=\allowbreak s{\large (}4w\left( v-s\right) r^{2}-4vwr+s\left(
w+1\right) ^{2}{\large )}$, $B(r)=\allowbreak 2sv{\large (}\allowbreak
2\left( s-v\right) r^{2}+2vr-s\left( 1+w\right) {\large )}$, $%
C(r)=\allowbreak s^{2}v^{2}$, $D(r)=\allowbreak 2rs^{2}v\left( w-1\right) $, 
$E(r)=\allowbreak -2rs^{2}v^{2}$, and $F(r)=\allowbreak r^{2}s^{2}v^{2}$.
Using (\ref{Om}) and (\ref{p}), it then follows that $p(r)=-16v^{2}s^{4}%
{\large (}\allowbreak 2\left( s-v\right) r+v{\large )}\alpha (r)$, where 
\begin{equation}
\alpha (r)=2\left( s-v\right) \left( v^{2}+w^{2}+1\right) r^{2}+2v\left(
v^{2}+w^{2}+1\right) r-s{\large (}v^{2}+(w+1)^{2}{\large )}\text{.}
\label{q}
\end{equation}

(\ref{R0}), (\ref{R1}), and (\ref{R2}) now become $s,v>0$(already assumed)
and $\allowbreak 2s-v>0$. Now $\alpha (0)=\allowbreak -s{\large (}%
v^{2}+(w+1)^{2}{\large )}<0$ and $\alpha (1)=s{\large (}v^{2}+(w-1)^{2}%
{\large )}>0$, which implies that $\alpha $ has precisely one root in $J$
since $\alpha $ is a quadratic. By (\ref{dG}), (\ref{31}), and Lemma \ref{O}%
, $G$ is differentiable on $J$. The linear function $2\left( s-v\right) r+v$
is nonzero at $r=0$ since $v>0$ and at $r=1$ since $\allowbreak 2s-v>0$.
Thus the other factors of $p$ are nonzero and hence $p$ has precisely one
root, $r_{1}\in J$, which is also the unique root of $G^{\prime }(r)$ in $J$
by (\ref{dG}). Since $G(r)>0$ on $J$, $G(0)=G(1)=0$, and $G$ is positive in
the interior of $I$ and vanishes at the endpoints of $I$, $G(r_{1})$ must
yield the global maximum of $G$ on $J$. That proves Theorem \ref{T3}(i).
Note that the equation of $E_{I}$ is obtained by letting $r=r_{1}$ in
Proposition \ref{P2}. To prove Theorem \ref{T3}(ii), by Theorem \ref{T1}(i), 
$E_{I}$ has conjugate diameters, $CD_{1}$ and $CD_{2}$, parallel to the
diagonals, $D_{1}$ and $D_{2}$, of $Q_{s,t,v,w}$. $\overleftrightarrow{D_{1}}
$ has equation $y=\dfrac{t}{s}x=\allowbreak \dfrac{w+1}{v}x$ and $%
\overleftrightarrow{D_{2}}$ \ has equation $y=1+\dfrac{w-1}{v}x$. Since $%
Q_{s,t,v,w}$ is type 1, $\overleftrightarrow{D_{1}}=L$, the line thru the
midpoints of $D_{1}$ and $D_{2}$, and thus $L$ has equation $y=\dfrac{w+1}{v}%
x$. It then follows that $\overleftrightarrow{CD_{1}}=L$(see Remark \ref{Re1}%
) and hence $\overleftrightarrow{CD_{1}}$ has equation $y=\dfrac{w+1}{v}x$.
For convenience, we let 
\begin{eqnarray*}
\beta &=&(s-v)r_{1}+v\text{,} \\
\zeta &=&(s-v)r_{1}+s\text{.}
\end{eqnarray*}

By Proposition \ref{P2}, $\dfrac{B(r)E(r)-2C(r)D(r)}{4A(r)C(r)-B^{2}(r)}%
=\allowbreak \dfrac{1}{2}\dfrac{sv}{(s-v)r+v}$. Thus by (\ref{center}) of
Lemma \ref{L4}, $E_{I}$ has center $(x_{0},y_{0})$, where $x_{0}=\dfrac{%
B(r_{1})E(r_{1})-2C(r_{1})D(r_{1})}{4A(r_{1})C(r_{1})-B^{2}(r_{1})}=\dfrac{1%
}{2}\dfrac{sv}{\beta }$. Since $E_{I}$ has center ${\large (}x_{0},L(x_{0})%
{\large )}$ by Newton's Theorem and $L\left( \dfrac{1}{2}\dfrac{sv}{\beta }%
\right) =\dfrac{1}{2}\dfrac{w+1}{v}\dfrac{sv}{\beta }=\dfrac{1}{2}\dfrac{%
s(w+1)}{\beta }$, $E_{I}$ has center $\left( \dfrac{1}{2}\dfrac{sv}{\beta },%
\dfrac{1}{2}\dfrac{s(w+1)}{\beta }\right) $. Since $\overleftrightarrow{%
CD_{2}}$ passes through the center of $E_{I}$ and has the same slope as $%
\overleftrightarrow{D_{2}}$, which is $\dfrac{w-1}{v}$, $\overleftrightarrow{%
CD_{2}}$ has equation $y-\dfrac{1}{2}\dfrac{s(w+1)}{\beta }=\dfrac{w-1}{v}%
\left( x-\dfrac{1}{2}\dfrac{sv}{\beta }\right) $, which simplifies to $y=$ $%
\dfrac{w-1}{v}x+\dfrac{s}{\beta }$.
\end{proof}

\begin{proof}
Now suppose that $CD_{1}$\ intersects $E_{I}$ at the two distinct points $%
P_{1}=(x_{1},y_{1})=\left( x_{1},\dfrac{w+1}{v}x_{1}\right) $ and $%
P_{2}=(x_{2},y_{2})=\left( x_{2},\dfrac{w+1}{v}x_{2}\right) $. Since $P_{1}$
and $P_{2}$ lie on $E_{I}$, by Proposition \ref{P2}, $%
A(r_{1})x_{j}^{2}+B(r_{1})x_{j}y_{j}+C(r_{1})y_{j}^{2}+D(r_{1})x_{j}+E(r_{1})y_{j}+I(r_{1})=0,j=1,2 
$. Substituting $y_{j}=\dfrac{w+1}{v}x_{j}$ yields $%
A(r_{1})x_{j}^{2}+B(r_{1})x_{j}^{2}\left( \dfrac{w+1}{v}\right)
+C(r_{1})x_{j}^{2}\left( \dfrac{w+1}{v}\right)
^{2}+D(r_{1})x_{j}+E(r_{1})\left( \dfrac{w+1}{v}\right)
x_{j}+I(r_{1})=0,j=1,2$, which simplifies to%
\begin{gather}
{\large (}A(r_{1})+\left( \dfrac{w+1}{v}\right) B(r_{1})+\left( \dfrac{w+1}{v%
}\right) ^{2}C(r_{1}){\large )}x_{j}^{2}+  \label{quad1} \\
{\large (}D(r_{1})+\left( \dfrac{w+1}{v}\right) E(r_{1}){\large )}%
x_{j}+I(r_{1})=0,j=1,2\text{.}  \notag
\end{gather}%
If $x_{1}$ and $x_{2}$ are two distinct real roots of any quadratic, then 
\begin{equation*}
x_{2}-x_{1}=\dfrac{\sqrt{\tau }}{a}\Rightarrow (x_{2}-x_{1})^{2}=\dfrac{\tau 
}{a^{2}},
\end{equation*}%
where $\tau =$ discriminant and $a=$ leading coefficient. For the specific
quadratic in the variable $x_{j}$ given in (\ref{quad1}) we have%
\begin{eqnarray*}
\tau &=&{\large (}D(r_{1})+\left( \dfrac{w+1}{v}\right) E(r_{1}){\large )}%
^{2}- \\
&&4F(r_{1}){\large (}A(r_{1})+\left( \dfrac{w+1}{v}\right) B(r_{1})+\left( 
\dfrac{w+1}{v}\right) ^{2}C(r_{1}){\large )}
\end{eqnarray*}%
and 
\begin{equation*}
a=A(r_{1})+\left( \dfrac{w+1}{v}\right) B(r_{1})+\left( \dfrac{w+1}{v}%
\right) ^{2}C(r_{1})\text{.}
\end{equation*}%
Now applying Proposition \ref{P2} and simplfying yields 
\begin{eqnarray*}
\tau &=&16r_{1}^{2}s^{3}v^{2}\left( 1-r_{1}\right) \zeta {\large ,} \\
a &=&4r_{1}s\beta \text{.}
\end{eqnarray*}

Hence $(x_{2}-x_{1})^{2}=\dfrac{\tau }{a^{2}}=\dfrac{v^{2}s(1-r_{1})\zeta }{%
\beta ^{2}}$. Now $y_{2}-y_{1}=\dfrac{w+1}{v}(x_{2}-x_{1})\Rightarrow
(x_{2}-x_{1})^{2}+(y_{2}-y_{1})^{2}=(x_{2}-x_{1})^{2}\left( 1+\left( \dfrac{%
w+1}{v}\right) ^{2}\right) $, which implies that%
\begin{gather}
(x_{2}-x_{1})^{2}+(y_{2}-y_{1})^{2}=  \label{x1x2} \\
\left( 1+\left( \dfrac{w+1}{v}\right) ^{2}\right) \dfrac{v^{2}s(1-r_{1})%
\zeta }{\beta ^{2}}\text{.}  \notag
\end{gather}

Similarly, suppose that $CD_{2}$\ intersects $E_{I}$ at the two distinct
points $P_{3}=(x_{3},y_{3})=\left( x_{3},\dfrac{w-1}{v}x_{3}+\dfrac{s}{\beta 
}\right) $ and $P_{4}=(x_{4},y_{4})=\left( x_{4},\dfrac{w-1}{v}x_{4}+\dfrac{s%
}{\beta }\right) $. Since $P_{3}$ and $P_{4}$ lie on $E_{I}$, we have $%
A(r_{1})x_{j}^{2}+B(r_{1})x_{j}y_{j}+C(r_{1})y_{j}^{2}+D(r_{1})x_{j}+E(r_{1})y_{j}+I(r_{1})=0,j=3,4 
$. Substituting $y_{j}=\dfrac{w-1}{v}x_{j}+\dfrac{s}{\beta }$ and
simplifying yields 
\begin{gather}
{\large (}A(r_{1})+\dfrac{w-1}{v}B(r_{1})+\left( \dfrac{w-1}{v}\right)
^{2}C(r_{1}){\large )}x_{j}^{2}+  \label{quad2} \\
\left( \dfrac{s}{\beta }B(r_{1})+\dfrac{2s}{\beta }\dfrac{w-1}{v}%
C(r_{1})+D(r_{1})+\dfrac{w-1}{v}E(r_{1})\right) x_{j}+  \notag \\
\left( \dfrac{s}{\beta }\right) ^{2}C(r_{1})+\left( \dfrac{s}{\beta }\right)
E(r_{1})+I(r_{1})=0,j=3,4\text{.}  \notag
\end{gather}

For the quadratic in the variable $x_{j}$ given in (\ref{quad2}) we have 
\begin{eqnarray*}
\tau &=&\left( \dfrac{s}{\beta }B(r_{1})+\dfrac{2s}{\beta }\dfrac{w-1}{v}%
C(r_{1})+D(r_{1})+\dfrac{w-1}{v}E(r_{1})\right) ^{2} \\
&&-4{\large (}A(r_{1})+\dfrac{w-1}{v}B(r_{1})+ \\
&&\left( \dfrac{w-1}{v}\right) ^{2}C(r_{1}){\large )}\left( \left( \dfrac{s}{%
\beta }\right) ^{2}C(r_{1})+\left( \dfrac{s}{\beta }\right)
E(r_{1})+I(r_{1})\right) \text{.}
\end{eqnarray*}

The leading coefficient is $a=A(r_{1})+\dfrac{w-1}{v}B(r_{1})+\left( \dfrac{%
w-1}{v}\right) ^{2}C(r_{1})$. Applying Proposition \ref{P2} again and
simplfying yields 
\begin{eqnarray*}
\tau &=&\dfrac{16r_{1}s^{3}v^{2}\left( r_{1}-1\right) ^{2}\zeta ^{2}}{\beta }%
, \\
a &=&\allowbreak 4s\left( 1-r_{1}\right) \zeta \text{.}
\end{eqnarray*}

Thus $(x_{4}-x_{3})^{2}=\dfrac{\tau }{a^{2}}=\dfrac{16r_{1}s^{3}v^{2}\left(
r_{1}-1\right) ^{2}\zeta ^{2}}{\beta {\large (}\allowbreak 4s\left(
1-r_{1}\right) \zeta {\large )}^{2}}=\dfrac{r_{1}sv^{2}}{\beta }$. Now $%
(y_{4}-y_{3})^{2}=\left( \dfrac{w-1}{v}\right)
^{2}(x_{4}-x_{3})^{2}\Rightarrow (x_{4}-x_{3})^{2}+(y_{4}-y_{3})^{2}=\left(
1+\left( \dfrac{w-1}{v}\right) ^{2}\right) (x_{4}-x_{3})^{2}$, which implies
that

\begin{gather}
(x_{4}-x_{3})^{2}+(y_{4}-y_{3})^{2}=  \label{x3x4} \\
\left( 1+\left( \dfrac{w-1}{v}\right) ^{2}\right) \dfrac{r_{1}sv^{2}}{\beta }%
\text{.}  \notag
\end{gather}

$L_{1}$ and $L_{2}$ are \textbf{equal }conjugate diameters if and only if $%
\left\vert P_{1}P_{2}\right\vert =\left\vert P_{3}P_{4}\right\vert \iff
(x_{2}-x_{1})^{2}+(y_{2}-y_{1})^{2}=(x_{4}-x_{3})^{2}+(y_{4}-y_{3})^{2}$.
Using (\ref{x1x2}) and (\ref{x3x4}),$\left\vert P_{1}P_{2}\right\vert
=\left\vert P_{3}P_{4}\right\vert \iff \left( 1+\left( \dfrac{w+1}{v}\right)
^{2}\right) \dfrac{v^{2}s(1-r_{1})\zeta }{\beta ^{2}}=\left( 1+\left( \dfrac{%
w-1}{v}\right) ^{2}\right) \dfrac{r_{1}sv^{2}}{\beta }\iff $

$\left( 1+\left( \dfrac{w+1}{v}\right) ^{2}\right) {\large (}%
v^{2}s(1-r_{1})\zeta {\large )}\beta {\large -}\left( 1+\left( \dfrac{w-1}{v}%
\right) ^{2}\right) r_{1}sv^{2}\beta ^{2}=0\iff $

$\left( 1+\left( \dfrac{w+1}{v}\right) ^{2}\right) {\large (}(1-r_{1})\zeta 
{\large )-}\left( 1+\left( \dfrac{w-1}{v}\right) ^{2}\right) r_{1}\beta
=0\iff $

$\dfrac{2\left( s-v\right) \left( v^{2}+w^{2}+1\right) r_{1}^{2}+2v\left(
v^{2}+w^{2}+1\right) r_{1}-s{\large (}v^{2}+(w+1)^{2}{\large )}}{v^{2}}%
=0\iff \alpha (r_{1})=0$, where $\alpha $ is given by (\ref{q}). Since $%
r_{1} $ is a root of $\alpha $ by definition, that completes the proof of
Theorem \ref{T3}(ii).
\end{proof}

\textbf{Example}

Consider the quadrilateral, $Q$, with vertices $(0,0),(0,1),(8,4)$, and $%
(6,2)$. Thus $Q=Q_{s,t,v,w}$ with $s=8$, $t=4$, $v=6$, and $w=2$, which
satisfy (\ref{R0}), (\ref{R1}), and (\ref{R2}). $Q$ is a type 1 midpoint
diagonal quadrilateral since $vt=(w+1)s$. The diagonal lines of $Q$ are $%
\overleftrightarrow{D_{1}}$: $y=\dfrac{1}{2}x$, which is also the equation
of $L$, and $\overleftrightarrow{D_{2}}$: $y=1+\dfrac{1}{6}x$.

First, let $E_{0}$ be the ellipse inscribed\textbf{\ }in $Q$ corresponding
to $r=\dfrac{3}{7}$. By Proposition \ref{P2}(i) the equation of $E_{0}$ is $%
33x^{2}-148xy+196y^{2}+28x-168y=-36$. By (\ref{center}), $E_{0}$ has center $%
\left( \dfrac{7}{2},\dfrac{7}{4}\right) $ and the points of tangency of $%
E_{0}$ with $Q$ are given by $q_{1}=\allowbreak \left( 0,\dfrac{3}{7}\right) 
$, $q_{2}=\allowbreak \left( \dfrac{32}{9},\dfrac{7}{3}\right) $, $%
q_{3}=\allowbreak \left( \dfrac{62}{9},\dfrac{26}{9}\right) $, and $%
q_{4}=\allowbreak \left( \dfrac{18}{7},\dfrac{6}{7}\right) $.

\textbullet\ As guaranteed by Theorem \ref{T2}, the slope of $%
\overleftrightarrow{q_{2}q_{3}}=$ slope of $\overleftrightarrow{q_{1}q_{4}}=%
\dfrac{1}{6}=$ slope of $D_{2}$.

\textbullet\ Suppose that $DI_{1}$ and $DI_{2}$\ are diameters of $E_{0}$
which are parallel to the diagonals $D_{1}$ and $D_{2}$, respectively, and
suppose that $DI_{1}$ intersects $E_{0}$ at the two distinct points $P_{1}$\
and $P_{2}$, while $DI_{2}$ intersects $E_{0}$ at the two distinct points $%
P_{3}$ and $P_{4}$. The equations of $\overleftrightarrow{P_{1}P_{2}}$ and $%
\overleftrightarrow{P_{3}P_{4}}$\ are thus $y-\dfrac{7}{4}=\dfrac{1}{2}%
\left( x-\dfrac{7}{2}\right) $ and $y-\dfrac{7}{4}=\dfrac{1}{6}\left( x-%
\dfrac{7}{2}\right) $, respectively. To determine the coordinates of $P_{1}$%
\ and $P_{2}$, substitute the equation of $DI_{1}$ into the equation of $%
E_{0}$. That yields $33x^{2}-148x\left( \dfrac{7}{4}+\dfrac{1}{2}\left( x-%
\dfrac{7}{2}\right) \right) +196\left( \dfrac{7}{4}+\dfrac{1}{2}\left( x-%
\dfrac{7}{2}\right) \right) ^{2}+28x-168\left( \dfrac{7}{4}+\dfrac{1}{2}%
\left( x-\dfrac{7}{2}\right) \right) =-36$, which has solutions $x=\dfrac{7}{%
2}\pm \dfrac{1}{2}\sqrt{31}$. Hence $P_{1}=\dfrac{1}{4}(7-\sqrt{31})(2,1)$
and $P_{2}=\dfrac{1}{4}(7+\sqrt{31})(2,1)$. Similarly, $P_{3}=\dfrac{1}{4}%
(2,1)\left( 7-3\sqrt{2},7-\sqrt{2}\right) $ and $P_{4}=\dfrac{1}{4}%
(2,1)\left( 7+3\sqrt{2},7+\sqrt{2}\right) $. The family of lines which are
parallel to $\overleftrightarrow{P_{1}P_{2}}$ have equation $y=\dfrac{1}{2}%
x+b$. Substituting into the equation of $E_{0}$ yields $33x^{2}-148x\left( 
\dfrac{1}{2}x+b\right) +196\left( \dfrac{1}{2}x+b\right) ^{2}+28x-168\left( 
\dfrac{1}{2}x+b\right) =-36$. Solving for $x$ gives $x=\dfrac{7}{2}-3b\pm 
\dfrac{1}{2}\sqrt{31-62b^{2}}$, and thus the chords of $E_{0}$ which are
parallel to $\overleftrightarrow{P_{1}P_{2}}$ have midpoints $\left( \dfrac{7%
}{2}-3b,\dfrac{7}{4}-\dfrac{1}{2}b\right) $, which each lie on $%
\overleftrightarrow{P_{3}P_{4}}$ . Hence $DI_{1}$ and $DI_{2}$\ are the pair
of conjugate diameters of $E_{0}$ which are parallel to the diagonals of $Q$%
, as guaranteed by Theorem \ref{T1}.

Second, let $E_{I}$ be the unique ellipse of minimal eccentricity inscribed%
\textbf{\ }in $Q$. \ By (\ref{q}), $\alpha (r)=\allowbreak 164r^{2}+492r-360$
and the unique root of $\alpha $ in $J$ is $r_{1}=-\dfrac{3}{2}+\dfrac{27}{82%
}\sqrt{41}$. By Proposition \ref{P2}(i), after some simplification, the
equation of $E_{I}$ is $3(427-63\sqrt{41})x^{2}+8(-793+117\sqrt{41}%
)xy+164(61-9\sqrt{41})y^{2}+4(-2911+459\sqrt{41})x+24(2911-459\sqrt{41}%
)y=36(\allowbreak -3521+549\sqrt{41})$. By Theorem \ref{T1}, there are
conjugate diameters $DI_{1}$ and $DI_{2}$ of $E_{I}$ which are parallel to
the diagonals $D_{1}$ and $D_{2}$, respectively. Suppose that $DI_{1}$
intersects $E_{I}$ at the two distinct points $P_{1}$\ and $P_{2}$, while $%
DI_{2}$ intersects $E_{I}$ at the two distinct points $P_{3}$ and $P_{4}$.
As we did above, one can show that $P_{1}=\tfrac{-82+18\sqrt{41}-9\sqrt{74}+%
\sqrt{3034}}{20}{\large (}2,1{\large )},P_{2}=\tfrac{-82+18\sqrt{41}+9\sqrt{%
74}-\sqrt{3034}}{20}{\large (}2,1{\large ),}P_{3}=\left( \tfrac{-82+18\sqrt{%
41}-27\sqrt{10}+3\sqrt{\allowbreak 410}}{10},\tfrac{-82+18\sqrt{41}-9\sqrt{10%
}+\sqrt{\allowbreak 410}}{20}\right) \allowbreak $, and

$P_{4}=\left( \tfrac{-82+18\sqrt{41}+27\sqrt{10}-3\sqrt{\allowbreak 410}}{10}%
,\tfrac{-82+18\sqrt{41}+9\sqrt{10}-\sqrt{\allowbreak 410}}{20}\right)
\allowbreak $. It then follows that $\left\vert \overline{P_{1}P_{2}}%
\right\vert ^{2}=\left\vert \overline{P_{3}P_{4}}\right\vert ^{2}=\dfrac{37}{%
5}(61-9\sqrt{41})$. As guaranteed by Theorem \ref{T3}, $\overline{P_{1}P_{2}}
$ and $\overline{P_{3}P_{4}}$ are \textbf{equal} conjugate diameters of $%
E_{I}$ which are parallel to the diagonals of $Q$.

\end{document}